\documentclass[12pt]{amsart}

\usepackage{amsmath, amsfonts, amssymb}
\usepackage{enumerate, xspace}
\usepackage{tikz}
\usetikzlibrary{decorations.markings}

 \let\mathds\mathbf
\usepackage[hidelinks]{hyperref}

\newcommand{\mycomment}[1]{\textbf{*** #1 ***}}

\newcommand{\ignore}[1]{}

\newcommand{\Pene}{P\`ene}

\newcommand{\dimX}{D}

\newcommand{\tphi}{\widetilde{\phi}}

\newcommand{\tPhi}{\widetilde{\Phi}}

\newcommand{\betamax}{\beta_{\operatorname{max}}}

\newcommand{\Lip}{\operatorname{Lip}}

\newcommand{\Jmax}{J_{\operatorname{max}}}

\newcommand{\lp}{\left(} \newcommand{\rp}{\right)}

\newcommand{\ls}{\left[} \newcommand{\rs}{\right]}

\newcommand{\one}{{\mathds{1}}}
\newcommand{\onebr}[1]{\one_{\{#1\}}}

\newcommand\dto{\xrightarrow{d}}

\let\epsilon\varepsilon

\let\phi\phi

\let\tilde\widetilde

\let\bar\widebar 

\numberwithin{equation}{section}

\newtheorem{thm}{Theorem}[section]

\newtheorem{cor}[thm]{Corollary}

\newtheorem{prop}[thm]{Proposition}

\newtheorem{defn}[thm]{Definition}

\newtheorem{rmk}[thm]{Remark}

\newcommand\cW{{\mathcal W}}

\newcommand\bE{{\mathbb E}}

\newcommand\bN{{\mathbb N}}

\newcommand\bP{{\mathbb P}}

\newcommand{\Z}{\mathbb{Z}}
\newcommand{\R}{\mathbb{R}}

\begin{document}

\date{April 6, 2025}
\date{June 24, 2025}
\date{October 22, 2025}
\date{\today.}

\title[Birkhoff sum convergence of Fr\'echet observables to stable
laws]{Stable laws for heavy-tailed  observables on polynomially mixing billiards.}

\author[M. Nicol]{Matthew Nicol$^*$}
\thanks{* Corresponding author}
\address{Matthew Nicol\\ Department of Mathematics\\
University of Houston\\
Houston\\
TX 77204\\
USA}
\email{matthewjames.nicol@gmail.com}
\urladdr{https://sites.google.com/cougarnet.uh.edu/nicol/home}

\author[Manpreet Singh]{Manpreet Singh}
\address{Manpreet Singh\\ Department of Mathematics\\
  University of Houston\\
  Houston\\
  TX 77204\\
  USA} \email{msingh26@cougarnet.uh.edu}

\author[A. T\"or\"ok]{Andrew T\"or\"ok}
\address{Andrew T\"or\"ok\\ Department of Mathematics\\
  University of Houston\\
  Houston\\
  TX 77204\\
  USA and 
{Institute of Mathematics of the Romanian Academy, Bucharest, Romania.}}
\email{torok@math.uh.edu}
\urladdr{http://www.math.uh.edu/~torok/}

\thanks{ MN thanks the Leverhulme Foundation and Warwick University for support, and 
the Mathematics Research Centre, University of Warwick, for their hospitality.  MN was also supported in
  part by NSF Grant DMS 2009923. AT was supported in part by NSF Grant DMS 1816315. }

\keywords{Stable Limit Laws, Poisson Limit Laws.} 

\subjclass[2010]{37A50, 37H99,  60F05, 60G51,60G55.}

\begin{abstract}

  We investigate the competition between two distinct mechanisms generating 
  stable laws in  deterministic dynamical systems:  slow mixing of the system
  and heavy-tailed observables.
  
  For heavy-tailed observables on polynomially mixing billiards with cusps we show these two mechanisms
  interact and  there is a transition, depending on the mixing exponent and the index of the heavy-tailed observable,
  such that the limit law is determined by either the observable or the dynamics.  
  
  We prove stable limit laws for heavy-tailed observables of the form
  $\phi(x)= d(x,x_0)^{-\frac{2}{\alpha}}, 0< \alpha < 2$, 
  where $x_{0} \in \partial Q$ is a generic point on the dynamical system given by  the collision map of a polynomially mixing
  billiard $(T, Q, \mu)$ with cusps. The observable $\phi$ has a tail 
  of stable index $\alpha$, i.e. $\mu(|\phi|>t) \sim t^{-\alpha}$.
The
  billiard systems we consider have a slow mixing rate so that suitably
  scaled H\"{o}lder observables on the billiard satisfy a stable law of
  index $1/\gamma$, with $\gamma$ a function of the flatness of the cusps.
  
  We establish stable limit laws satisfied by Birkhoff sums of $\phi$ for the parameter
  range $\gamma \in (1/2,1)$, $\alpha \in (0,2)$ ($\alpha \not =1$) as a function of $\gamma$ and $\alpha$.

   A key result is to verify a standard condition for the heavy-tailed $\phi$, ``vanishing small values'', which ensures that  large values of the observable dominate the time-series, in the range
  $1<\alpha<2$.  This is proved in the general setting of an exponentially mixing Young Tower.

  As an application, in the setting of  intermittent maps,
  \[
  T_{\gamma} (x) := 
  \left\{
    \begin{array}{ll}
      (2^{\gamma}x^{\gamma} +1)x& \mbox{if $0 \leq x<\frac{1}{2}$};\\
      2x-1 & \mbox{if $\frac{1}{2} \le x \le 1$}.
    \end{array}
  \right.
 \]
  we extend the results of~\cite{CNT2025} to cover all parameter values of the map and 
   the observable $\phi(x)= d(x,x_0)^{-\frac{1}{\alpha}}$ (which has stable index $\alpha$ if $x_0\not =0$) in the regime $0< \alpha < 2$, $0<\gamma<1$.  We show if $x_0=0$, the 
  indifferent fixed point, then the stable law has index $(\frac{1}{\alpha}+\gamma)^{-1}$.

    \end{abstract}

\maketitle
\newpage 
\tableofcontents

\section{Introduction}\label{sec:intro}

\bigskip

There are two main mechanisms for producing stable limit laws and anomalous diffusion in 
physical systems,  heavy-tailed observables or slow mixing (strong dependence)  of the system. The first effect, heavy
tails, is sometimes called the Noah effect~\cite[Section 4.5]{Whitt} while slow mixing is sometimes
called the Joseph effect~\cite[Section 4.6]{Whitt}.  The terminology is due to Mandelbrot~\cite{Mandelbrot_Wallis}
and motivated by the deluge-like aspect of a heavy-tailed observable and the predictability of a slowly mixing system.
Whitt writes ``the most complicated case involves both heavy tails and dependence. Unfortunately there is not
a well-developed theory for stochastic process limits in this case''~\cite[Section 4.7]{Whitt}. In this paper we
investigate the interplay of both effects in polynomially mixing billiard systems with cusps and intermittent-type maps.

 On these  canonical models
of anomalous diffusion, regular (H\"older or Lipschitz) observables satisfy stable laws due to the strong dependence.
We consider a heavy-tailed observable
$\phi(x)=d(x,x_0)^{-\frac{2}{\alpha}}$, $0<\alpha <2$, on a polynomially
mixing billiard $(T,\tilde{Q},\mu)$ as described
in~\cite{Zhang2017,Jung_Zhang2018}. Since the invariant measure $\mu$ is
equivalent to Lebesgue and the phase space is two-dimensional, the
observable $\phi$ has a tail distribution of index $\alpha$, i.e.
$\mu (\phi >t)\sim t^{-\alpha}$. The billiard systems we consider have a
sufficiently slow mixing rate that H\"older observables satisfy a stable
law of index $1/\gamma$. Billiards with this property have been studied in
a series of recent papers~\cite{Zhang2017,Jung_Zhang2018,Melbourne_Varandas2020, Jung_Pene_Zhang2020}. We
focus in this work on stable laws, rather than convergence in the $J_1$ or
$M_1$ topologies. 

Theorems~\ref{main1} and~\ref{main2} establish, with some caveats, a  transition in terms of $\alpha$ and $\gamma$ in that the stable law index arises from the heavy-tailed observable if $\frac{1}{\alpha}>\gamma$ and from the 
slow mixing mechanism if $\gamma>\frac{1}{\alpha}$. A main contribution is
Theorem~\ref{distribution} and its Corollary~\ref{distribution-corollary}
which show that in an exponentially mixing system Birkhoff sums of a general  observable with tail index $\alpha$
 converge to zero in distribution if scaled by a factor $n^{-\frac{1}{\alpha} -\epsilon}$ for any $\epsilon>0$.

 The scheme of proof we use is similar to that
of~\cite{CNT2025}. We have kept the notation from~\cite{CNT2025} as much as
possible to allow comparison with the results and ideas of that paper. We
 induce on a set $M$ such that the cusps lie in the complement of $M$.
We assume $x_0\in M$, so that in particular $x_0$ is not a cusp
point. The set $M$ is such that the first return map $F:M\to M$ can be
modeled as a Young Tower with exponential tails. This means that there is a
subset $M_0 \subset M$ such that the return map to $M_0$ under $F$ is
uniformly hyperbolic with singularities and exponentially mixing for
H\"older observables. By contrast, the return time function $R$ to $M$ has
polynomial tails. This
set up is described in Zhang's paper~\cite{Zhang2017}, which gives a class
of billiard systems (a form of Machta billiard~\cite{Machta_83}) with
arbitrarily slow mixing rates. The initial work on analyzing mixing rates
of Machta type billiards was carried out in~\cite{CZ2005}, where the rate
of roughly $O(1/n)$ was established. This work follows that
of~\cite{CNT2025} in which a similar investigation was carried out for an
intermittent map. We also extend results of~\cite{CNT2025}. In the
intermittent map case of ~\cite{CNT2025} the first return map to the
induced system is itself a Gibbs-Markov map and so does not need a separate
induction to be modeled as a Young Tower.

The key approach we take is to decompose $\phi$ into two parts; we write
\[
  \text{$\phi=\phi_1+\phi_2$ where $\phi_1=\phi \one_{M^c}$ and
    $\phi_2=\phi \one_{M}$.}
\]
If the point $x_0 \in M$ then $\phi_1$ fits into the class of observables
considered by~\cite{Zhang2017,Jung_Zhang2018,Jung_Pene_Zhang2020} and we
use results from  their work.

The function $\phi_2$ is a heavy tailed observable which is a function of
distance, maximized at a point of an exponentially mixing billiard system.
Hence it fits into the class of observables considered by \Pene{} and
Saussol~\cite{Pene-Saussol2020}. A consequence of their work is  that suitably scaled point
processes of $\phi_2$,
\[
  N_n:=\sum_{j=1}^n \delta_{(j/n, \phi_2\circ T^j/b_n)}
\]
converge in distribution to compound or simple Poisson Point process $N$ on
$[0,1] \times \R$.  In the case
$\alpha \in (1,2)$ we verify a ``vanishing small values'' condition for
generic $x_0$ in the setting of exponentially mixing Young Towers, Theorem~\ref{thm:mixing-condition_Young-Tower},
to show that the observable satisfies a stable law (this is automatic for
$\alpha \in (0,1)$ by~\cite{DH95}).

We also revisit the setting of~\cite{CNT2025} where an intermittent map
$(T_{\gamma}, [0,1],\mu_{\gamma})$ was studied and, using
Theorem~\ref{distribution}, answer some open questions from that work.
In particular, we show that a stable limit law holds for the observable $\phi(x)=d(x,x_0)^{-\frac{1}{\alpha}}$ ($x_0\not=0$) in all parameter ranges
(save for $\alpha=1$) extending the results from the regime
$\alpha<1-\frac{1}{\gamma}+\frac{1}{\gamma^2}$. These results are given in
Proposition~\ref{thm:intermittent}. In Theorem~\ref{cusp_intermittent} we
give the stable law satisfied by the observable
$\phi(x)=x^{\frac{1}{\alpha}}+\kappa$ ($\kappa$ chosen so
that $\phi$ is centered), which is maximized at the
indifferent fixed point. This extends results of Gou\'ezel~\cite[Theorem
1.3 (3)]{Gouezel_Intermittent} from the range $\frac{1}{\alpha}+\gamma<1$
to all $0<\gamma<1$, $\alpha\in (0,2)$ ($\alpha \not =1$).

\section{Background}

\subsection{Regularly varying functions and domains of
  attraction}\label{ssec:regu}

For background on the relation between domains of attraction of stable laws
and regularly varying functions we refer to Feller~\cite{Fel71} or Bingham,
Goldie and Teugels~\cite{Bingham-Goldie-Teugels-1987}.

For Machta type billiards $\phi(x):=d(x,x_0)^{-\frac{2}{\alpha}}$, $0<\alpha<2$,
satisfies $\mu(\phi>t)\sim t^{-\alpha}$ and is regularly varying with
stable index $\alpha$.

For  regularly varying $\phi$ on a probability space $(\Omega,\mu)$ we define scaling
constants $b_n$ (related to the index) and $c_n$ (centering) by

\begin{defn}\label{def:scaling-constants}\ \null
  \begin{enumerate}[--]

  \item the scaling sequence $(b_n\sim n^{\frac{1}{\alpha}}) $ satisfies
    \begin{equation}\label{eq:tail1}
      \lim_{n \to \infty} n \mu (|\phi| > b_n ) = 1.
    \end{equation}

  \item the centering sequence $(c_n)_{n\ge 1}$ is given by
    \begin{equation}\label{eq:centering}
      c_n =
      \begin{cases}
        0 &\text{if $\alpha \in (0,1)$}\cr n 
        \mu(\phi)& \text{if $\alpha \in(1,2)$}
      \end{cases}.
    \end{equation}

  \end{enumerate}
\end{defn}

\begin{rmk}\label{rmk:usual-stable-description}

  A random variable  $X$ is in the domain of attraction of a stable law of index
  $\alpha\in(0,2)$ if
  \begin{equation}\label{eq:slow-variation1}
    {\mu(|X|> x)} =x^{-\alpha} L(x)
  \end{equation}
  (that is, its tail probability is regularly varying) and
  \begin{equation}\label{eq:slow-variation2}
    \lim_{x\to\infty} \frac{\mu(X > x)}{\mu(|X|> x)} = p
  \end{equation}
  for a slowly varying function $L(x)$ and $0\le p \le 1$. See
  \cite[Introduction]{DH95}.
\end{rmk}

We recall some estimates for regularly varying functions.

\begin{prop}[Karamata,
  \cite{Bingham-Goldie-Teugels-1987}]\label{prop:karamata}

  Let $\phi$ be regularly varying with index $\alpha \in (0,2)$. 

    The following hold for all $\epsilon>0$:\begin{enumerate}[(a)]

\item
  $\displaystyle \lim_{n \to \infty} n \mu ( |\phi| > \epsilon b_n ) =
  \epsilon^{-\alpha}$ (from the definition of $b_n$ and the regular
  variation of $\phi$)

\item If $k > \alpha$ then
  \[
    \displaystyle \mu(|\phi|^k \onebr{|\phi|\le u}) \sim
    \frac{\alpha}{k-\alpha}u^k \mu (|\phi|>u) \text{ as $u\to\infty$}
\]
In particular:

\item if $\alpha\in(0,2)$ then
\[
  \mu(| \phi|^2 \one_{\left\{ | \phi | \le \epsilon b_n \right\}}) \sim
  \frac{\alpha}{2 - \alpha} (\epsilon b_n)^2 \mu(| \phi | > \epsilon b_n)
\]

\item if $\alpha \in (0,1)$ then
  \[
    \mu(| \phi| \one_{\left\{ | \phi | \le \epsilon b_n \right\}}) \sim
    \frac{\alpha}{1 - \alpha} \epsilon b_n \mu(| \phi | > \epsilon b_n)
  \]

\item if $\alpha \in (1,2)$ then
  \[
    \lim_{n \to \infty} \frac{n}{b_n} \mu(|\phi |\one_{\{ | \phi | > \epsilon
      b_n \}}) = \epsilon^{1 - \alpha} (2p -1) \alpha / (\alpha - 1)
    \]
\end{enumerate}
\end{prop}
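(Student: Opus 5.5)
The plan is to reduce every part of Proposition~\ref{prop:karamata} to Karamata's theorem for truncated moments of a regularly varying tail, applied to the tail function $G(t):=\mu(|\phi|>t)=t^{-\alpha}L(t)$ from Remark~\ref{rmk:usual-stable-description}.

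For part (a), write $n\mu(|\phi|>\epsilon b_n)=n\mu(|\phi|>b_n)\cdot G(\epsilon b_n)/G(b_n)$. The first factor tends to $1$ by \eqref{eq:tail1}. The second factor tends to $\epsilon^{-\alpha}$, because $L$ is slowly varying and $b_n\to\infty$. Since $\epsilon$ is fixed, this step needs no uniformity.

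For part (b), use the layer-cake formula
\[
\mu(|\phi|^k\onebr{|\phi|\le u})=\int_0^u k t^{k-1}G(t)\,dt-u^kG(u).
\]
Because $k-1-\alpha>-1$, the integrand $kt^{k-1-\alpha}L(t)$ is regularly varying with index greater than $-1$. Karamata's theorem (direct half) then gives
\[
\int_0^u kt^{k-1}G(t)\,dt\sim \frac{k}{k-\alpha}\,u^kG(u).
\]
Subtracting $u^kG(u)$ leaves $\frac{\alpha}{k-\alpha}u^kG(u)$, which is the claim. The only real work here is Karamata's theorem itself. I would either cite \cite{Bingham-Goldie-Teugels-1987} or prove it quickly: substitute $t=us$, use the uniform convergence theorem for $L(us)/L(u)$ on $[\delta,1]$, and control $[0,\delta]$ with Potter bounds $L(us)/L(u)\le Cs^{-\eta}$ for some small $\eta<k-\alpha$, letting $\delta\to0$. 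Handling the piece near $0$ via Potter's bounds is the main technical obstacle. Parts (c) and (d) are (b) with $k=2$ and $k=1$ respectively, evaluated at $u=\epsilon b_n\to\infty$.

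For part (e), with $\alpha>1$, use the tail version of Karamata:
\[
\mu(|\phi|\onebr{|\phi|>u})=uG(u)+\int_u^\infty G(t)\,dt\sim \Big(1+\frac{1}{\alpha-1}\Big)uG(u)=\frac{\alpha}{\alpha-1}uG(u).
\]
The integral converges because $G$ has index $-\alpha<-1$, and the asymptotic again uses Potter bounds, now on $[1,\infty)$ after rescaling. Taking $u=\epsilon b_n$ and combining with (a) gives
\[
\frac{n}{b_n}\,\epsilon b_n\,G(\epsilon b_n)\frac{\alpha}{\alpha-1}\to \epsilon^{1-\alpha}\frac{\alpha}{\alpha-1}.
\]

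This computes the absolute moment, but the stated constant contains the factor $(2p-1)$. That factor belongs to the signed quantity $\mu(\phi\onebr{|\phi|>\epsilon b_n})$. To get it, apply the same computation separately to the positive-part tail $G_+(t)=\mu(\phi>t)\sim pG(t)$ and the negative-part tail $G_-(t)\sim(1-p)G(t)$, using \eqref{eq:slow-variation2}, and subtract. The result is $(p-(1-p))=(2p-1)$ times the limit above. I would record explicitly that (e) is meant for the signed truncated mean, since that is the form used later for centering. For the billiard observable $\phi\ge0$ we have $p=1$, so the two readings coincide.
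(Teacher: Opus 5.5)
Your proposal is correct and matches the paper, which gives no argument of its own beyond citing Karamata's theorem in \cite{Bingham-Goldie-Teugels-1987}; your layer-cake reduction to the direct and tail halves of that theorem is the standard content behind the citation. You are also right that (e) as printed cannot hold for $\mu(|\phi|\one_{\{|\phi|>\epsilon b_n\}})$, whose limit is $\epsilon^{1-\alpha}\alpha/(\alpha-1)$, and that the factor $(2p-1)$ belongs to the signed mean $\mu(\phi\,\one_{\{|\phi|>\epsilon b_n\}})$; the two readings agree for the nonnegative billiard observable, where $p=1$. One small repair in your $G_\pm$ split: for $p\in\{0,1\}$ one of $G_\pm$ need not be regularly varying, so instead of applying Karamata to $G_\pm$ directly, use $\int_u^\infty G_+(t)\,dt=(p+o(1))\int_u^\infty G(t)\,dt$, which follows from $G_+(t)/G(t)\to p$, and likewise for $G_-$ with $1-p$.
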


\section{The deterministic billiard model}\label{sec:billiard-model}

We consider the modified Machta~\cite{Machta_83} billiard considered by
Jung, \Pene{} and Zhang \cite{Jung_Pene_Zhang2020}, which is a billiard
with flat cusp points. We recall the notation
of~\cite{Jung_Pene_Zhang2020}.

The billiard table $Q$ is a bounded region of $\R^2$, the boundary of which
consists of $q\ge 3$ dispersing $C^3$ curves $\Gamma_i$, such that
$\partial Q=\cup_{i=1}^q \Gamma_i$. The flat cusps are given by the
intersection of the curves $P_i=\Gamma_i \cap \Gamma_{i+1}$. In a
neighborhood of $P_i$ the boundary of $Q$ may be parametrized in an
Euclidean coordinate system $(x_i,y_i)$, for $x_i\in [0,\epsilon]$ with
some small $\epsilon>0$, by two curves $(x_i, y_{i,\pm}(x))$:
\[
  y_{i,\pm}=\pm C_{i,\pm} \frac{x_i^{\beta_i}}{\beta_i}+O(x_i^{2\beta_i-1})
  \text{\qquad and \qquad}
\frac{d y_{i,\pm}}{d x}=\pm C_{i,\pm}
  {x_i^{\beta_i-1}}+O(x_i^{2\beta_i-2})
\]
We assume  $\beta_i \ge 2$ for all $P_i$. We assume $C_{i,+}$, $C_{i,-}\ge 0$ are not both
zero, and that the unique tangent orbit out of $P_i$ hits $\partial Q$ at a
point that is not another cusp.
\begin{figure}[h]
\begin{center}
\begin{tikzpicture}
    \draw[black, ultra thick] (-4,0) .. controls (-1,0.1) and (0,1) .. (1,2);
    \draw[black, ultra thick] (-4,0) .. controls (-1,-0.1) and (0,-1) .. (1,-2);
    \draw[black, ultra thick] (1,2) .. controls (0.05,0.4) and (0.05,-0.4) .. (1,-2);
    \filldraw[black] (-4,0) circle (1.5pt) node[anchor=east]{$P_{1}$};
    \filldraw[black] (1,2) circle (1.5pt) node[anchor=south]{$P_{2}$};
    \filldraw[black] (1,-2) circle (1.5pt) node[anchor=north]{$P_{3}$};

    \draw[
    black,
    postaction={decorate},
    decoration={
        markings,
        mark=at position 0.1 with {\arrow{>}},
        mark=at position 1 with {\arrow{>}}
    }
]
(0.3, 0.2)
-- (-1.75, -0.27)
-- (-2.25, 0.16) 
-- (-2.50, -0.11) 
-- (-2.65, 0.08) 
-- (-2.66, -0.08) 
-- (-2.45, 0.11) 
-- (-2.10, -0.18)
-- (-0.9, 0.4);
    

\end{tikzpicture} 
\end{center}
\caption{Machta billiard with 3 cusps}
\end{figure}
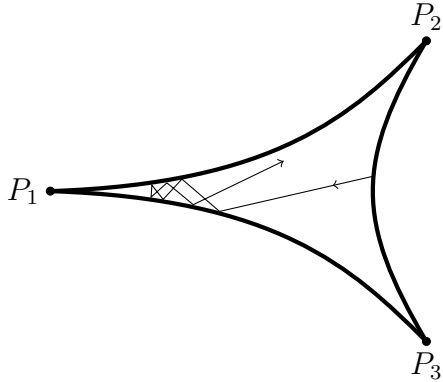

Let $J$ be the set of cusps.
Let
\[
  \betamax=\max \{\beta_i, i=1,\dots,q\}
\]
and assume $\betamax>2$. Let $\Jmax$ be the set of cusps with
$\beta_i=\betamax$.

Define\footnote{In \cite{Jung_Pene_Zhang2020} the inverse of $\gamma$ is
  denoted $\alpha$, so the limits there are $\alpha$-stable laws for H\"older observables. We use
  $\alpha$ as the stable index arising from the heavy tailed observable
   to allow easy comparison with~\cite{CNT2025}.}

   \[
   \gamma_i:\frac{\beta_i-1}{\beta_i}
   \]
   and 
  \[
  \gamma:=\frac{\betamax-1}{\betamax}\in ({1}/{2},1)
\]
Note that $\frac{1}{\gamma}\in (1,2)$ will be the stable index for the
stable law arising from $\phi_1$.

We use the usual coordinate system of a billiard table $(r,\theta)$ where
$\theta\in [0,\pi]$ and $r\in [0,|\partial Q|]\equiv \R/|\partial Q|\Z$. We
write $\tilde{Q}$ for the phase space
$\tilde{Q}=\{(r,\theta): r\in \R/|\partial Q|\Z, \theta \in [0,\pi]\}$. In
this coordinate system we take 
\[
  d(x,x')=|r-r'|+|\theta-\theta'| \qquad \text{for $x=(r,\theta)$,
    $x'=(r', \theta')$.}
\]
 Since the space is compact our results also hold under the Euclidean
 metric.

We denote by $T:\tilde{Q}\to \tilde{Q}$ the usual billiard collision map,
angle of incidence equals angle of reflection, which preserves the measure
$d\mu=\frac{1}{2|\partial Q|}\sin\theta \; d r\; d\theta$,
see~\cite{Zhang2017,Jung_Zhang2018,Jung_Pene_Zhang2020}.

Let $\psi(r,\theta)$ be a H\"older or piecewise H\"older observable on
$\tilde{Q}$.

Denote by $r_i$ the $r$-coordinate of $P_i$ along $\partial Q$; at each
cusp point  $P_i$ define
\[
  \overline{\psi}_{i,\pm} (\theta)=\lim_{r \to r_i\pm} \psi(r,\theta)
\]
where the limits are taken along the upper and lower branches of the cusp.
For each  cusp in $J$ define the quantity
\begin{equation}\label{trap_i}
  I_{\psi,i}=\frac{1}{4} \int_{0}^{\pi} \left(\overline{\psi}_{i,+}
    (\theta) +\overline{\psi}_{i,-} (\theta)\right)\sin^{\gamma_i} \theta
  d\theta
\end{equation}
and
\[
I_i=\frac{1}{2}\int_{0}^{\pi}\sin^{\gamma_i} \theta d\theta
\]
The quantity $I_{\psi,i}$ roughly determines whether the trapping affect of the cusp $P_i$  felt by the observable.

Note that at  each cusp  $P_i$ in $\Jmax$ the quantity
\begin{equation}
  I_{\psi,i}=\frac{1}{4} \int_{0}^{\pi} \left(\overline{\psi}_{i,+}
    (\theta) +\overline{\psi}_{i,-} (\theta)\right)\sin^{\gamma} \theta
  d\theta
\end{equation}
has the same density $\sin^{\gamma} \theta$.

\begin{rmk}\label{rmk:stronger-condition}
  A key estimate of Jung and Zhang is~\cite[Lemma 4.4 and Proof of Theorem
  4.1]{Jung_Zhang2018} where they show that if $I_{\psi,i}=0$ for
  all $i\in \Jmax$ then the Birkhoff sum
  $\sum_{j=1}^n (\psi \circ T^j-\mu(\psi))$ has variance which
  grows slower than $n^{2\gamma-\epsilon}$ for some $\epsilon>0$, hence
  $n^{-\gamma}\sum_{j=}^n (\psi\circ T^j-\mu(\psi)\to 0$ in
  distribution.

  This is roughly analogous  to
  the setting of the intermittent map, where if $\phi$ is a mean zero
  observable, under some H\"older regularity conditions, then
  $E(x):=\phi(x)-\phi(0)$ satisfies a CLT and hence Birkhoff sums of $E(x)$
  converge in distribution to zero under any scaling
  $n^{-\frac{1}{\alpha}}$, $0<\alpha<2$.

 \end{rmk}
 
\subsection{The reduction  to an observable on $M$}\label{contribution}

Jung, P\`ene and Zhang decompose $M$ further into a partition
$M=M_0\cup_{i=1}^m M_i$ where $M_i\subset M$ is the set 
 for which the orbits encounter the cusp $P_i$ first after leaving $M$, while $M_0$ is the set of 
 orbits which return to $M$ without encountering a cusp.
 
 Let $R$ be the return time function of $M$ to $M$ and $R_i=R 1_{M_i}$. Let $\mu_M$ be the conditional measure on $M$, that is $\mu_M (A) =\frac{\mu (A)}{\mu (M)}$ for $A\subset M$.
 
 Given a H\"older or piecewise H\"older function $\psi$ on $M$, Jung, P\`ene and Zhang~\cite{Jung_Pene_Zhang2020}
 show that the scaled Birkhoff sums of  the induced function of $\psi$ on $M$ are determined by those of the observable
 \[
 \tilde{R} (x)=\sum_{i\in J}\frac{I_{\psi,i}}{I_i} (R_i(x)-\mu_M (R_i))
 \]
 Note that if $\psi$ is constant, i.e. $\psi(x)=C$, then 
 \begin{equation}\label{constant}
 \sum_{i\in J}\frac{I_{\psi,i}}{I_i} (R_i(x)-\mu_M (r_i))=C(R(x)-\mu_M (R))
 \end{equation}

 Under the assumption that $\mu(\psi)=0$ and there is some cusp $P_i$,
 $P_i \in \Jmax$ with $I_{\psi,i}\not=0$, \cite[Theorem
 2.1]{Jung_Pene_Zhang2020} shows that
\[
  \frac{1}{n^{\gamma} } \sum_{j=0}^{n-1} \psi \circ T^j \dto
  \sum_{P_i\in \Jmax, I_{\psi,i}\not =0} X^{(i)}_{{\gamma}^{-1}}
\]
where $\sum_{P_i\in \Jmax, I_{\psi,i}\not =0} X^{(i)}_{1/\gamma}$ is a sum of independent
stable random variables of index $\gamma^{-1}$, so it is also a stable
random variable of index $\gamma^{-1}$. The precise parameters of each
$X^{(i)}_{1/\gamma}$ are described in~\cite[Theorem
2.1]{Jung_Pene_Zhang2020}.

In~\cite[Theorem
 2.1]{Jung_Pene_Zhang2020} the components $X^{(i)}_{{\gamma}^{-1}}$
 arise by showing the function $\sum_{i\in J} \frac{I_{\psi,i}}{I_i} [R_i (x)-\mu_M (R_i)]$
 satisfies a  stable law with independent stable variables 
 given by the functions $ \frac{I_{\psi,i}}{I_i}[R_i (x)-\mu_M (R_i)]$. The components with $\gamma_i <\gamma$
 converge to zero in distribution under the scaling $n^{-\gamma}$. This induced stable law is then lifted to give the 
 stable law on $\tilde{Q}$.

 Furthermore,~\cite[Theorem 2.2]{Jung_Pene_Zhang2020} (see also~\cite{Melbourne_Varandas2020})
shows that, under the
additional assumption that for each $P_i \in \Jmax$, ${\psi}|U_i \ge 0$ or
$\psi| U_i \le 0$ on a neighborhood of $P_i$
\[
  W_n (t):=\frac{1}{n^{\gamma}} \sum_{j=0}^{[nt]-1} \psi\circ T^j
\]
converges in the Skorokhod $M_1$-topology to a $\gamma^{-1}$-stable L\'evy
process.

\section{Main Results}\label{sec:main-results}

Given $\phi (x)=d(x,x_0)^{-\frac{2}{\alpha}}$, with $x_0$ not a cusp point.
If $1<\alpha<2$, then $\phi$ is integrable and we define $\tilde{\phi}=\phi-\mu (\phi)$.
We define at each cusp point $P_i$
\[
  \tilde{\phi}_{i,\pm} (\theta)=\lim_{r \to r_i\pm} \tilde{\phi}(r,\theta)
\]
and 
\begin{equation}
  I_{\tilde{\phi},i}=\frac{1}{4} \int_{0}^{\pi} \left(\tilde{\phi}_{i,+}
    (\theta) +\tilde{\phi}_{i,-} (\theta)\right)\sin^{\gamma} \theta
  d\theta
\end{equation}

We will make the following assumptions on the point $x_0$, satisfied by $\mu$ a.e. point.
\begin{itemize}
\item[ (1)]  $x_0$ is not a cusp point;
\item[ (2)]  $x_0$ satisfies the no-short return conditions of~\cite[Section 4.1.2]{GuptaHollandNicol2011};
\item[ (3)] $x_0$ satisfies the assumptions of~\cite{Pene-Saussol2020}.
\end{itemize}

\begin{thm}\label{main1}
  Suppose $(T,\widetilde{Q},\mu)$ is the boundary collision map on dispersing
  billiards with cusps $P_i$, $i=1,\dots,q$, as in
  Section~\ref{sec:billiard-model}. Let
  $\phi(x)=d(x,x_{0})^{-\frac{2}{\alpha}}$ with $0<\alpha<2$, where
  $x_{0}$ is not a cusp point. If $\alpha \in (1,2)$ define
  $c:= \mu (\phi)$, otherwise $c:=0$. Let
  $\gamma := \frac{\betamax -1}{\betamax}\in(1/2,1)$, with $\betamax>2$.

  Then for $\mu$ a.e. $x_0$:
  \begin{itemize}

  \item [(a)] If $\frac{1}{\alpha} > \gamma$ then
    \[
      \frac{1}{n^{1/\alpha} }\sum_{j=1}^{n}[\phi \circ T^j -c] \dto
      X_\alpha
    \]
    where $X_\alpha$ has a stable distribution of index $\alpha$;
			
  \item [(b)] If $\frac{1}{\alpha} < \gamma$ (this implies $\alpha>1$) and
    $I_{\tilde{\phi},i} \not =0$ for some $i\in \Jmax$, then
    \[
      \frac{1}{n^{\gamma}} \sum_{j=1}^{n}( \phi \circ T^j -
      \mu(\phi) \dto X_{\gamma^{-1}}
    \] 
    where $X_{1/\gamma}$ has a stable distribution of index $1/\gamma$.

\end{itemize} 
\end{thm}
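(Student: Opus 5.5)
The plan is to use the decomposition $\phi=\phi_1+\phi_2$ with $\phi_1=\phi\one_{M^c}$ and $\phi_2=\phi\one_M$. Since $x_0\in M$ is not a cusp point, we may choose $M$ so that $x_0$ lies in its interior; then $\phi_1$ is bounded and piecewise H\"older. In each regime one summand carries the limit law and the other is negligible, and Slutsky's lemma combines them. Centering constants are split accordingly, $c=\mu(\phi_1)+\mu(\phi_2)$ when $\alpha>1$.

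\emph{Case (a), $\frac1\alpha>\gamma$.} The bounded part is handled first. Since $\phi_1-\mu(\phi_1)$ is H\"older on $\tilde Q$, \cite[Theorem 2.1]{Jung_Pene_Zhang2020} (or Remark~\ref{rmk:stronger-condition} if all $I_{\psi,i}=0$) shows that $n^{-\gamma}\sum_{j}(\phi_1\circ T^j-\mu(\phi_1))$ is tight. Hence $n^{-1/\alpha}\sum_j(\phi_1\circ T^j-\mu(\phi_1))\to0$ in distribution, because $1/\alpha>\gamma$. When $\alpha<1$ the centering is $0$, and $n^{1-1/\alpha}\mu(\phi_1)\to0$, so this term is still negligible.

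For $\phi_2$ I would pass to the induced system. On $M$, the return map $F$ is modeled by a Young tower with exponential tails. The function $\phi_2$ is a distance-type heavy-tailed observable on $M$ with tail index $\alpha$, and it is unaffected by inducing: $\Phi_2=\sum_{k<R}\phi_2\circ T^k=\phi_2$ on $M$. By \Pene--Saussol~\cite{Pene-Saussol2020}, for $\mu$-a.e.\ $x_0$ the point processes of $\phi_2\circ F^j/b_n$ converge to a (compound) Poisson process. For $\alpha<1$, \cite{DH95} then yields the stable law for $F$-Birkhoff sums. For $1<\alpha<2$ one must also verify vanishing small values, which is exactly Theorem~\ref{thm:mixing-condition_Young-Tower}. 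This gives $n^{-1/\alpha}\sum_{j<n}(\phi_2\circ F^j-c_M)\dto X_\alpha$ on $(M,\mu_M)$.

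Lifting to $T$ needs more care than a standard argument, because $R$ has infinite variance. Write $N_n(x)$ for the number of returns to $M$ up to time $n$; then $N_n/n\to\mu(M)$ a.s.\ by the ergodic theorem. The $T$-sum of $\phi_2$ equals the $F$-sum up to time $N_n$, plus a boundary term of at most one summand, which is negligible in probability after scaling. Random time change for the $F$-sums needs tightness in the Skorokhod $J_1$ topology, or at least a maximal inequality. Alternatively one can write the centering correction $\mu(\phi_2)\,(n\mu(M)-N_n)$ and control it by the stable behaviour of $R$: $n-\sum_{j<N_n}R\circ F^j$ fluctuates at scale $n^{\gamma}=o(n^{1/\alpha})$. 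Passing from $\mu_M$ to $\mu$ uses Eagleson/Zweim\"uller strong distributional convergence. I expect this lifting step, namely combining the heavy-tailed sum with a polynomially mixing return time, to be the main technical obstacle. The fallback is the corollary to Theorem~\ref{distribution}: $n^{-1/\alpha-\epsilon}$ times the $F$-sums tends to $0$, which bounds the discrepancy on a random window of length $O(n^{\gamma})$.

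\emph{Case (b), $\frac1\alpha<\gamma$.} Here $\alpha>1$, and the roles are reversed. The H\"older part of $\tilde\phi=\phi-\mu(\phi)$ near the cusps determines the constants $I_{\tilde\phi,i}$, which coincide with those of $\phi_1$ minus the constant $\mu(\phi)$ by \eqref{constant}. Following the reduction of Section~\ref{contribution}, the induced observable of $\tilde\phi$ on $M$ is
\[\tilde\phi_2-\mu(\phi)\,(R-\mu_M(R))+\textstyle\sum_i\frac{I_{\phi_1,i}}{I_i}(R_i-\mu_M(R_i))+(\text{error}),\]
and the $R$-terms satisfy a $1/\gamma$-stable law under scaling $n^{-\gamma}$, as in \cite[Theorem 2.1]{Jung_Pene_Zhang2020}. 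The heavy part $\tilde\phi_2$ has index $\alpha$, so Corollary~\ref{distribution-corollary} gives $n^{-\gamma}\sum_j\tilde\phi_2\circ F^j\to0$ in distribution, since $\gamma>1/\alpha$. Adding this negligible term to the Jung--P\`ene--Zhang argument, and lifting exactly as in their paper, yields $X_{1/\gamma}$; the limit is nondegenerate because $I_{\tilde\phi,i}\ne0$ for some $i\in\Jmax$.
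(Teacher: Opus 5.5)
Your proposal is correct and is essentially the paper's argument. It uses the same splitting $\phi=\phi_1+\phi_2$ and obtains the induced $\alpha$-stable law for $\Phi_2$ from \Pene--Saussol, \cite{DH95} and Theorem~\ref{thm:mixing-condition_Young-Tower}. Your case (a) lift, once the random window of length $O(n^\gamma)$ is controlled, is exactly Gou\"ezel's Proposition~\ref{gouezel-lifting}(b) with $\alpha(n)=n^\gamma$. In case (b) you recombine $-\mu(\phi_2)(R-\mu_M(R))$ with the Jung--P\`ene--Zhang reduction of $\phi_1$ via \eqref{constant}, as the paper does. Lifting once at the induced level instead of estimating $V_n$ separately is equivalent: the induced observable of $\phi-\mu(\phi)$ is in $L^1(\mu_M)$ with mean zero, so the ergodic theorem gives the maximal condition of Proposition~\ref{gouezel-lifting}(b) at scale $n^\gamma$.

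Two things to tidy up. First, in case (a) you do not need Corollary~\ref{distribution-corollary} for the window. For $\alpha\in(1,2)$ the ergodic theorem suffices because $n^\gamma=o(n^{1/\alpha})$. For $\alpha\in(0,1)$, where that corollary is not stated, use $\Phi_2\ge 0$: the maximal partial sum on the window is then the endpoint sum, of order $n^{\gamma/\alpha}=o(n^{1/\alpha})$. Second, the quantity that fluctuates at scale $n^\gamma$ is $R_{N_n}-N_n\,\mu_M(R)$, not the age $n-R_{N_n}$, which is tight.
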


\begin{rmk}
The distribution of $X_{1/\gamma}$ is given by lifting the stable law of the induced function

\[
\sum_{i\in \Jmax, I_{ \tilde{\phi}, i} \not =0} \frac{I_{\tilde{\phi},i}}{I_i} [R_i (x)-\mu_M (R_i)]
\]

\end{rmk}

The next theorem is in the setting of a mean-zero observable $\phi$, not necessarily of 
form $\phi(x) =d(x,x_0)^{-\frac{2}{\alpha}}$, with  support bounded away from each cusp. Roughly it states
that it is possible to  lift an induced stable law  on a slowly-mixing tower if the induced observable 
 and the 
induced dynamics satisfy the assumptions of Theorem 11.2.

\begin{thm}\label{main2}
  For each $i=1,...,q$ let $O_i$ be an open set
  containing $P_i$ and $U=\cup_{i=1}^q O_i$. 
  Suppose $ \phi \equiv 0$ on $U$, $\int \phi d\mu=0$,  and the induced version of $\phi$ on  a subset $M\subset \tilde{Q}$ 
  satisfies a stable law of index $\alpha$, where $\gamma>\frac{1}{\alpha}$.

  Then the stable law lifts and
    \[
      \frac{1}{n^{1/\alpha}} \sum_{j=1}^{n}( \phi \circ T^j -
      \mu (\phi) )\dto X_{\alpha}
    \]
    where $X_{\alpha}$ has a stable distribution of index $\alpha$.

\end{thm}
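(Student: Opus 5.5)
We lift the induced stable law using the standard scheme for first-return maps with heavy-tailed return times. Write $\Phi=\sum_{j=0}^{R-1}\phi\circ T^j$ for the induced observable on $M$ and $F=T^R$ for the first return map. Since $\phi\equiv 0$ on $U$, and we may choose $M$ so that $M^c\subset U$ (shrinking $M$ if necessary), an excursion outside $M$ contributes nothing. Hence $\Phi=\phi\one_M$ on $M$. The hypothesis is that $n^{-1/\alpha}\sum_{k=0}^{n-1}\Phi\circ F^k\dto X_\alpha$ under $\mu_M$; since $\mu(\phi)=0$, Kac's formula gives $\mu_M(\Phi)=0$ when $\alpha>1$.

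We introduce the lap number $N_n(x)=\#\{1\le j\le n: T^jx\in M\}$. Then
\[
\sum_{j=1}^n\phi\circ T^j=\sum_{k=1}^{N_n}\Phi\circ F^k ,
\]
with no boundary error, because the last incomplete excursion lies in $M^c$, where $\phi$ vanishes. By the ergodic theorem $N_n/n\to\mu(M)$ a.s. We then carry out the following steps in order.

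\begin{enumerate}[(i)]
\item Replace the random index $N_n$ by the deterministic index $m_n=\lfloor n\mu(M)\rfloor$. The difference is a sum of at most $|N_n-m_n|$ consecutive terms $\Phi\circ F^k$.
\item Show this difference is $o(n^{1/\alpha})$ in probability. Since $R$ has tails of order $t^{-1/\gamma}$, the fluctuations of $N_n$ around $n\mu(M)$ are of order $n^{\gamma}$; this follows from the Jung--P\`ene--Zhang stable law for $\sum (R\circ F^k-\mu_M(R))$, scaled by $n^{\gamma}$. A block of $L\approx n^{\gamma+\epsilon}$ consecutive terms of $\Phi$ has maximal partial sum of order $L^{1/\alpha}$, which is $o(n^{1/\alpha})$ because $\gamma<1$.
\item Obtain the required maximal inequality in probability, uniformly over blocks starting near $m_n$, from the induced stable law (tightness of the normalized partial-sum process). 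The cleanest route is to use $M_1$ or $J_1$ tightness on the induced system, or an Ottaviani-type inequality for the exponentially mixing Young tower $F$.
\item Pass from $\mu_M$ to $\mu$ by Eagleson's theorem (strong distributional convergence), since $F$ is ergodic and the statistics are asymptotically $T$-invariant. This yields $n^{-1/\alpha}\sum_{j=1}^n\phi\circ T^j\dto \mu(M)^{1/\alpha}X_\alpha$, a stable law of index $\alpha$.
\end{enumerate}

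The main obstacle is step (ii)/(iii): controlling $\sum_{k=m_n}^{N_n}\Phi\circ F^k$ when $N_n$ is random and correlated with $\Phi$. Note that $\gamma>1/\alpha$ forces $\alpha>1$, so the scale $n^{\gamma}$ of the lap-number fluctuations is larger than $n^{1/\alpha}$. We therefore need the bound on partial sums over windows of length $n^{\gamma+\epsilon}$, not a trivial count. I would first intersect with the event $\{|N_n-m_n|\le n^{\gamma+\epsilon}\}$, whose probability tends to $1$. On that event I would bound
\[
\mu_M\Bigl(\max_{|\ell|\le n^{\gamma+\epsilon}}\Bigl|\sum_{k=m_n}^{m_n+\ell}\Phi\circ F^k\Bigr|>\delta n^{1/\alpha}\Bigr)
\]
using $F$-invariance, which reduces it to $\max_{\ell\le L}|\sum_{k<\ell}\Phi\circ F^k|>\delta n^{1/\alpha}$. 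This is then handled by the tightness coming from the induced limit theorem, since $L^{1/\alpha}/n^{1/\alpha}\to0$.
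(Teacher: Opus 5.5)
Your overall scheme matches the paper's. You compare $T$-time with $F$-time and use the $1/\gamma$-stable law of $R_n-n\mu_M(R)$ to confine the discrepancy to $O_P(n^{\gamma})$ returns. You then use $\phi\equiv 0$ off $M$ to turn it into a sum of at most $Pn^{\gamma}$ consecutive terms $\Phi\circ F^k$, and exploit $\gamma<1$. Your lap-number bookkeeping is the dual of the paper's $V_n$, and using Eagleson's theorem to pass from $\mu_M$ to $\mu$ is fine.

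\textbf{The gap.} It is exactly the step you call the main obstacle. You need
\[
\max_{\ell\le L}\Bigl|\sum_{k<\ell}\Phi\circ F^k\Bigr| = o_P(n^{1/\alpha}), \qquad L=Pn^{\gamma},
\]
and you justify it by \emph{tightness coming from the induced limit theorem}. That implication is false. A one-dimensional stable law for $n^{-1/\alpha}\sum_{k<n}\Phi\circ F^k$ gives no control of maxima of partial sums. For example, take $X_k=\xi_k+Z_k-Z_{k-1}$ with $\xi_k$ i.i.d.\ $\alpha$-stable and $Z_k$ i.i.d.\ with tail index $\beta<\gamma\alpha$. Then $n^{-1/\alpha}\sum_{k\le n}X_k\dto X_\alpha$, yet $\max_{\ell\le n^{\gamma}}|\sum_{k\le\ell}X_k|$ is of order $n^{\gamma/\beta}\gg n^{1/\alpha}$.

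Your two alternative routes do not close this either:
\begin{enumerate}
\item The functional route ($J_1$ or $M_1$ tightness) is not in the hypothesis. Moreover, $\gamma>1/\alpha$ forces $\alpha\in(1,2)$, which is precisely the regime where the paper says it cannot establish $M_1$ convergence of the induced process (the vanishing-small-values condition of Basrak et al.\ is not verifiable).
\item The Ottaviani-type inequality is only named. Ottaviani is for independent summands, and $\Phi\circ F^k$, with $\Phi$ unbounded and not measurable with respect to the tower partition, is not covered by off-the-shelf mixing versions.
\end{enumerate}

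\textbf{What fills it.} The missing ingredient is the paper's Theorem~\ref{distribution}, which is where the real work of the lift lies. It uses exponential decay of correlations of $F$ for Lipschitz observables and the regularly varying tail of $\Phi$, both beyond the bare stable-law hypothesis. The argument runs as follows.
\begin{enumerate}
\item Truncate at $n^{1/\alpha+\epsilon}$; the discarded part is seen with probability $O(n^{-\alpha\epsilon})$ in $n$ steps.
\item Approximate the truncated, centred function in $L^1$ by Lipschitz functions of polynomially bounded norm.
\item Bound the second moment of every block sum of length $m$ by $Cm\,n^{2/\alpha-1+\epsilon(2-\alpha)}\log n$. Lags beyond $C\log n$ are negligible by exponential mixing; shorter lags are handled by Cauchy--Schwarz.
\item Apply Billingsley's maximal inequality and Chebyshev to get $n^{-1/\alpha-\epsilon}\max_{j\le n}|\sum_{i\le j}(\Phi\circ F^i-\mu_M(\Phi))|\dto 0$.
\end{enumerate}
This is weaker than the $O_P(L^{1/\alpha})$ you assert, but it suffices because of the polynomial room given by $\gamma<1$. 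With $1/\alpha+\epsilon=1/(\alpha\gamma)$ and $m=Pn^{\gamma}$, one has $m^{1/(\alpha\gamma)}=P^{1/(\alpha\gamma)}n^{1/\alpha}$, which is how the paper obtains $n^{-1/\alpha}V_n\dto 0$.

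A minor slip: to arrange $M^c\subset U$ you must enlarge $M$, not shrink it.
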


\subsection{Stable laws for intermittent maps-revisited.}\label{sec:intermittent-maps}

As a consequence of Theorem~\ref{distribution} we are able to extend the parameter range
for which we have convergence to stable law of a scaled Birkhoff sum of an 
observable of stable index $\alpha$  on a  class of intermittent type maps. The 
maps 
$T_{\gamma} : [0,1] \rightarrow [0,1]$  are defined in~\cite{LSV99}, and given by
\begin{equation}\label{IM}
  T_{\gamma} (x) := 
  \left\{
    \begin{array}{ll}
      (2^{\gamma}x^{\gamma} +1)x& \mbox{if $0 \leq x<\frac{1}{2}$};\\
      2x-1 & \mbox{if $\frac{1}{2} \le x \le 1$}.
    \end{array}
  \right.
\end{equation}
For $\gamma \in [0,1)$, there is a unique absolutely continuous (ergodic)
invariant probability measure $\mu_{\gamma}$ with density $h_{\gamma}$
bounded away from zero and satisfying $h_{\gamma}(x) \sim Cx^{-\gamma}$ for
$x$ near zero. The observable we consider is again
$\phi(x)=d(x,x_0)^{-1/\alpha}$, $x_0\in [0,1]$.

\subsubsection{Stable laws for the observable $\phi(x)=d(x, x_0)^{-\frac{1}{\alpha}}$,
  $x_0\in (0,1]$ on an LSV map.}\ \null

An application of Theorem~\ref{distribution} allows
us to improve the results of~\cite{CNT2025} to include the entire parameter
range $\alpha\in (0,2)$, $\alpha \not =1$, $\gamma \in [0,1)$ from the restricted parameter range of 
$\frac{1}{\gamma}< \alpha <1-\frac{1}{\gamma}+\frac{1}{\gamma^2}$ of~\cite[Theorem 8.4 case (iii)]{CNT2025}.
  
\begin{prop}\label{thm:intermittent}
  Let $(T_{\gamma}, X, \mu)$ be an LSV map of the unit interval with
  $0\le \gamma <1$ and $\alpha \in (0,1)\cup(1,2)$. Suppose
  $\phi (x) =d(x,x_0)^{-\frac{1}{\alpha}}$ where $x_0\in (0,1]$. If
  $\alpha\in (1,2)$ (so $\phi$ is integrable) then let
  $c_n=\mu (\phi)=\int d(x,x_0)^{-\frac{1}{\alpha}}d\mu_{\gamma}$, for
  $\alpha\in (0,1)$ let $c_n=0$.
  \begin{itemize}

  \item [(a)] If $\frac{1}{\alpha}> \gamma$ then
    \[
      \frac{1}{n^{1/\alpha} }\sum_{j=1}^{n}[\phi \circ T^j -c_n] \to_{d} X_{\alpha}
    \]
    where $X_\alpha $
    has a stable distribution of index $\alpha$.

  \item [(b)] If $\frac{1}{\alpha} < \gamma $ and
    $\phi (0)-\mu (\phi) \not =0$ then
    \[
      \frac{1}{n^{\gamma}} \sum_{j=1}^{n}( \phi \circ T^j - \mu (\phi)
      \to_{d} X_\gamma
    \]
    where $X_\gamma$ has a stable distribution of index $\gamma$.

  \item [(c)] If $\frac{1}{\alpha} < \gamma $ and $\phi(0)-\mu (\phi)=0$
    then
    \[
      \frac{1}{n^{1/\alpha}}  \sum_{j=1}^{n} (\phi \circ T^j - \mu (\phi)
      \to_{d} X_\alpha
    \]
    where $X_\alpha$ has a stable distribution with index $\alpha$.
    
  \end{itemize}
\end{prop}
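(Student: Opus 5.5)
We follow the scheme outlined in the introduction for billiards, now in the simpler LSV setting. Induce on $Y=[\frac12,1]$, with first return time $R$ and first return map $F=T^R$, which is Gibbs--Markov. Since $\mu_\gamma(R>n)\sim Cn^{-1/\gamma}$, Hölder observables with nonzero value at $0$ induce to functions in the domain of a $1/\gamma$-stable law; note that in (b) the index should read $1/\gamma$. Fix $x_0\in(0,1]$ and choose a neighbourhood $U$ of $0$ with $x_0\notin\overline U$. Split $\phi=\phi_1+\phi_2$, where $\phi_2=\phi\one_{V}$ is supported in a small neighbourhood $V$ of $x_0$ with $V\cap\overline{U}=\emptyset$, and $\phi_1=\phi\one_{V^c}$ is bounded and Hölder near $0$, with $\phi_1(0)=\phi(0)$. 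We then centre each piece by its mean.

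\emph{Step 1 (the regular part).} Take $\tilde\phi_1=\phi_1-\mu(\phi_1)$. If $\tilde\phi_1(0)\neq0$, Gouëzel's result gives $n^{-\gamma}\sum_{j\le n}\tilde\phi_1\circ T^j\dto X_{1/\gamma}$. If instead $\tilde\phi_1(0)=0$ (the analogue of Remark~\ref{rmk:stronger-condition}), the Birkhoff sums are $O(n^{\gamma-\delta})$ in distribution, or even satisfy a CLT.

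\emph{Step 2 (the heavy part).} The function $\phi_2$ vanishes near $0$. Its induced version $\Phi_2=\sum_{k<R}\phi_2\circ T^k$ on $Y$ equals $\phi_2$ composed with a uniformly expanding branch structure, and a large value of $\Phi_2$ requires a visit close to $x_0$. Hence $\Phi_2$ has tail index $\alpha$, and by the Gibbs--Markov theory (the point process approach of \cite{CNT2025}) it satisfies an $\alpha$-stable law under $n^{-1/\alpha}$.

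\emph{Step 3 (combining).} In case (a), $1/\alpha>\gamma$, the sum from Step 1 is $O(n^{\gamma})=o(n^{1/\alpha})$ in distribution, so we must lift the induced $\alpha$-stable law of $\Phi_2$ to $T$. We do this as in Theorem~\ref{main2}, lifting via Melbourne--Török or Gouëzel's lifting lemma. The growth condition needed is that $\max_{k\le R}|\sum_{j<k}\tilde\phi_2\circ T^j|$ be negligible; this holds because the constant drift $-\mu(\phi_2)$ during an excursion contributes a term of order $R$, and $R$ has $1/\gamma$-tails with $1/\gamma>\alpha$. In case (b), $n^{1/\alpha}=o(n^{\gamma})$, so the $\phi_1$ sum dominates. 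Here $\tilde\phi_1(0)=\phi(0)-\mu(\phi)+\mu(\phi_2)$, which is nonzero for a suitable shrinking $V$, and the claimed law follows provided $\sum\tilde\phi_2\circ T^j=o_P(n^\gamma)$. In case (c), the $\phi_1$ contribution is only $O(n^{\gamma-\delta})$. We absorb the drift coefficient $\mu(\phi_2)$ by choosing the splitting so that $\tilde\phi_1(0)=0$ exactly: cut off by a smooth bump and adjust. We then need this contribution to be $o(n^{1/\alpha})$.

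\emph{Main obstacle.} The step I expect to be hardest is the following. In (b) we need $n^{-\gamma}\sum\tilde\phi_2\circ T^j\to0$, and in (c) we need the $1/\alpha$ term from $\phi_2$ to beat the $n^{\gamma-\delta}$ error from $\phi_1$. These are exactly the places where \cite{CNT2025} needed the parameter restriction. The plan is to invoke Theorem~\ref{distribution} and Corollary~\ref{distribution-corollary} on the exponentially mixing induced system. These give $n^{-1/\alpha-\epsilon}\sum_{j\le n}\Phi_2\circ F^j\to0$, and we transfer this to $T$ via $R$-time changes, using $\sum_{j<n}R\circ F^j\approx n/\mu(Y)$.

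In (b), choose $\epsilon$ with $1/\alpha+\epsilon<\gamma$. In (c), the required statement is an improved estimate on the regular part: we show that $\tilde\phi_1(0)=0$ yields a sum of order $n^{1/2}$, up to a logarithmic factor, when $\phi_1$ is Hölder. Since $1/\alpha>1/2$, this is $o(n^{1/\alpha})$, and Step 2 with the lifting argument of case (a) then concludes.
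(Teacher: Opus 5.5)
Your correction of the index in (b) to $1/\gamma$ is right. Your CLT bound for the regular part in (c) is also fine: after the bump adjustment, $\tilde\phi_1$ is Lipschitz with $\tilde\phi_1(0)=0$, so its induced version is bounded.

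\textbf{The gap in (c).} The regular-part bound is classical and is not the obstacle in (c). The gap is the sentence ``Step 2 with the lifting argument of case (a) then concludes.'' You justified the case (a) lifting by $1/\gamma>\alpha$, whereas in (c) $1/\gamma<\alpha$: the return time has heavier tails than the induced observable. Removing the drift does not address the real issue, which is the time change. The number of returns to $Y$ before time $N$ differs from $N\mu(Y)$ by a quantity of order $N^\gamma$ in distribution, and here $N^\gamma\gg N^{1/\alpha}$. So any lifting needs $\max_{k\le PN^\gamma}|S_k\Phi_2|=o_P(N^{1/\alpha})$ for the induced sums of the mean-zero heavy part. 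This holds whether you use Proposition~\ref{gouezel-lifting}(b) with $\alpha(n)=n^\gamma$ or the control of $V_n$ in the proof of Theorem~\ref{main2}.

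Neither tool you have gives this. The ergodic theorem only gives $O(N^\gamma)$. The induced stable law only gives fixed-time bounds, not a bound on the maximum over a window whose position is random.

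This maximal estimate is exactly what the paper's proof of (c) supplies. Theorem~\ref{distribution}, applied to the Gibbs--Markov induced map with exponential decay in $BV$ versus $L^\infty$, gives $\max_{k\le m}|S_k\Phi_2|=O_P(m^{1/\alpha+\epsilon})$. Taking $m=PN^\gamma$ with $\epsilon$ so small that $\gamma(1/\alpha+\epsilon)<1/\alpha$ finishes the argument as in Theorem~\ref{main2}. You invoke Theorem~\ref{distribution} only for (b), and in a form without the maximum, which is its essential content.

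\textbf{A false step in (b).} The paper simply quotes (b) from \cite{CNT2025}, but your argument for it does not work as written. With $\tilde\phi_2=\phi_2-\mu(\phi_2)$, the sum $\sum_{j\le n}\tilde\phi_2\circ T^j$ is not $o_P(n^\gamma)$. Its induced version is $(\Phi_2-\mu_Y(\Phi_2))-\mu(\phi_2)(R-\overline{R})$, and the second term produces a nondegenerate $1/\gamma$-stable limit at scale $n^\gamma$. Equivalently, transferring via $\sum_{j<n}R\circ F^j\approx n/\mu(Y)$ costs $\mu_Y(\Phi_2)$ times an error of order $n^\gamma$, which is exactly the scale you need to beat.

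Hence $\tilde\phi_1(0)=\phi(0)-\mu(\phi)+\mu(\phi_2)$ is the wrong coefficient. The two drift contributions must be recombined into $\phi(0)-\mu(\phi)$, as the paper does for billiards in the proof of Theorem~\ref{main1}(b) using \eqref{constant}. Applied with $\phi(0)=\mu(\phi)$, your (b) reasoning would even predict a nontrivial $n^\gamma$-limit, contradicting (c).

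The remedy is to use your bump adjustment already in (b), so that the heavy part has mean zero and vanishes near $0$. Its contribution is then $o_P(n^\gamma)$ by the same maximal estimate from Theorem~\ref{distribution}, choosing $1/\alpha+\epsilon<\gamma$.
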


\subsubsection{Stable laws for $\phi(x)=x^{-\frac{1}{\alpha}}$, Noah and Joseph effect combined.}\ \null
  
We extend the results of~\cite{Gouezel_Intermittent}, for the observable
$\phi(x)=x^{-\frac{1}{\alpha}}$, maximized at the indifferent fixed
point.

Gou\"ezel~\cite[Item 3 after the proof of Theorem
1.3]{Gouezel_Intermittent} has results for
$\phi(x)=x^{-\frac{1}{\alpha}} +\kappa$ where $\kappa$ is a centering constant 
in the case  $1<\alpha<2$. If
$\frac{1}{\alpha}+\gamma <1$ then $\phi$ is integrable, and $\kappa$ is
taken so that $\int \phi d {\mu_{\gamma}}=0$. Using operator renewal
theory, Gou\"ezel shows that: if $\frac{1}{\alpha}+\gamma<\frac{1}{2}$ then
$\phi$ satisfies a central limit theorem; if
$\frac{1}{\alpha}+\gamma=\frac{1}{2}$ then $\phi$ satisfies a central limit
theorem with scaling $\frac{1}{\sqrt{n\ln(n)}}$; and finally if
$\frac{1}{2}< \frac{1}{\alpha}+\gamma<1$ then $\phi$ satisfies a stable law
with scaling $n^{-(\frac{1}{\alpha}+\gamma)}$. He remarks that ``even when
$\gamma<\frac{1}{2}$ we can have convergence to a stable law when $\phi$ is
unbounded.''
  
We extend these results to the parameter range
$\{ (\alpha,\gamma): 0<\gamma<1, 0<\alpha<2, \alpha\not =1\}$ in the
following theorem:
  
\begin{thm}\label{cusp_intermittent}
  Let $(T_{\gamma},[0,1],\mu_{\gamma})$ be an LSV map \eqref{IM} with
  $0<\gamma <1$. Suppose $\phi(x)=x^{-\frac{1}{\alpha}}- \mu (\phi)$ (if integrable)
  or $x^{-\frac{1}{\alpha}}$ if not integrable. Suppose 
  $0<\alpha <2$, $\alpha\not =1$ and 
  $\frac{1}{2}<\frac{1}{\alpha}+\gamma$.

  Then 
  \[
    \frac{1}{n^{\frac{1}{\alpha}+\gamma}}\sum_{j=0}^n \phi\circ
    T_{\gamma}^j\dto X_{[\frac{1}{\alpha}+\gamma]^{-1}}
 \]
 where $X_{[\frac{1}{\alpha}+\gamma]^{-1}}$ has stable index
 $[\frac{1}{\alpha}+\gamma]^{-1}$.
\end{thm}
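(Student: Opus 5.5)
We will prove Theorem~\ref{cusp_intermittent} by inducing on $Y=[\frac12,1]$. Since $\phi$ is concentrated at the indifferent fixed point, the Noah and Joseph effects combine into a single heavy-tailed induced observable. Let $F=T_\gamma^{\tau}:Y\to Y$ be the first return map. It is Gibbs--Markov, and the return time satisfies $\mu_Y(\tau>n)\sim C n^{-1/\gamma}$. Write $\Phi=\sum_{j=0}^{\tau-1}\phi\circ T_\gamma^j$ for the induced observable.

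\textbf{Step 1: tail of $\Phi$.} On the cylinder $\{\tau=n\}$ the orbit passes through the points $x_{n-1}<\dots<x_1$ of the branch accumulating at $0$, with $x_k\sim c\,k^{-1/\gamma}$. Hence the uncentered contribution is
\[
\sum_{k\le n} x_k^{-1/\alpha}\sim c' n^{\frac{1}{\alpha\gamma}+1}=c' n^{\frac{1}{\gamma}(\frac1\alpha+\gamma)}.
\]
The sum of $x_k^{-1/\alpha}$ dominates the linear term $-n\mu(\phi)$ because $\frac{1}{\alpha\gamma}>0$. Since $\Phi$ is essentially monotone in $\tau$, this gives
\[
\mu_Y(\Phi>t)\sim C\,t^{-\beta}, \qquad \beta=\Bigl(\frac1\alpha+\gamma\Bigr)^{-1}.
\]
The hypothesis $\frac1\alpha+\gamma>\frac12$ gives $\beta<2$. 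When $\alpha\in(1,2)$ and $\frac1\alpha+\gamma>1$ we have $\beta<1$ and no centering is needed. Otherwise we center by $\int_Y\Phi\,d\mu_Y=\mu(\phi)/\mu(Y)=0$. The case $\beta=1$, i.e.\ $\frac1\alpha+\gamma=1$, needs a logarithmic correction, and I would exclude it or treat it separately. The contribution of $\phi$ restricted to $Y$ is bounded, so it does not affect the tail.

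\textbf{Step 2: induced stable law.} Since $\Phi$ is constant up to bounded Hölder errors on the partition elements $\{\tau=n\}$ of a Gibbs--Markov map and has a one-sided regularly varying tail of index $\beta\in(0,2)$, the standard result of Aaronson--Denker (via spectral perturbation of the transfer operator) gives
\[
n^{-1/\beta}\sum_{j<n}\Phi\circ F^j\dto X_\beta .
\]

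\textbf{Step 3: lifting.} This is the main obstacle. I would use the standard lifting scheme of Melbourne--Török / Gouëzel, as used in~\cite{CNT2025}. It requires controlling $\max_{j\le n}\sum_{k<\tau\circ F^j}|\phi|\circ T_\gamma^k$ at scale $n^{1/\beta}$. One must compare the true time $n$ with the induced time $N(n)\approx n/\mu_Y(\tau)$; here $\tau\in L^1$ because $\gamma<1$. The two ingredients are:
\begin{enumerate}[(i)]
\item the law of large numbers for $\tau$;
\item showing that $n^{-1/\beta}\max_{j\le n}|\Phi|\circ F^j$ restricted to the incomplete final excursion tends to $0$ in probability.
\end{enumerate}
For (ii), the final excursion has length $o(n)$ with high probability. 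Its partial sum is bounded by $\Phi^+$ evaluated on that single excursion, whose tail is of the same order as that of $\Phi$, so after scaling it is $O_P(1)$ times the probability of being near the boundary, which tends to $0$.

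Because $\Phi$ is nonnegative up to a bounded term when uncentered, the monotonicity trick (sums of nonnegative terms between consecutive returns) gives tightness directly. In the centered case $\alpha>1$, I would split $\phi=(\phi-\phi(Y\text{-bounded part}))+\text{bounded}$. The bounded mean-zero part contributes at most scale $n^{\gamma}\ll n^{1/\beta}$ by Gouëzel's results. The nonnegative part is handled by monotonicity. Finally, the constant $\mu(Y)^{1/\beta}$ is absorbed into $X_\beta$.
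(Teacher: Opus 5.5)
\textbf{The gap is in Step~2.} Your outline matches the paper's: induce on $Y=[\frac12,1]$, read off $\beta=(\frac1\alpha+\gamma)^{-1}$ from $x_k\sim k^{-1/\gamma}$, obtain an induced stable law for the Gibbs--Markov map, and lift with Gou\"ezel's result. But Step~2 rests on a false claim, and that point is exactly what the paper's proof is organized around.

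$\Phi$ is \emph{not} constant on $\{\tau=n\}$ up to a bounded H\"older error. For $y$ in this cylinder, each $T_\gamma^j y$ lies between consecutive points $x_{k+1},x_k$, and $|\phi(x_{k+1})-\phi(x_k)|\asymp k^{\frac{1}{\alpha\gamma}-1}$. Summing over the excursion, the difference between the two ends of the cylinder essentially telescopes to $\phi(x_n)$. So $\Phi$ oscillates by order $n^{\frac{1}{\alpha\gamma}}$ on $\{\tau=n\}$, which is unbounded. Hence the Lipschitz constants of $\Phi$ on partition elements are not uniformly bounded, and the Aaronson--Denker theorem in the form you invoke does not apply to $\Phi$. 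Your Step~1 survives, because this oscillation is only $O(1/n)$ relative to $\Phi|_{\{\tau=n\}}\asymp n^{1+\frac{1}{\alpha\gamma}}$. The induced limit theorem, however, does not follow as written.

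\textbf{The missing step} is the paper's splitting $\Phi=\Psi+(\Phi-\Psi)$. Here $\Psi|_{\{\tau=n\}}=\sum_{k\le n}\phi(x_k)$ is locally constant with tail index $\beta$, so the Gibbs--Markov stable law genuinely applies to $\Psi$. The error satisfies $|\Phi-\Psi|\le g$ with $g=j^{\frac{1}{\alpha\gamma}}$ on $\{\tau=j\}$, so $\mu_Y(g>t)\asymp t^{-\alpha}$. Because $\frac1\alpha<\frac1\alpha+\gamma$, the error is negligible at scale $n^{\frac1\alpha+\gamma}$, but this has to be proved:
\begin{itemize}
\item for $\frac1\alpha+\gamma>1$, by the ergodic theorem when $\alpha>1$ (since $g\in L^1$), or by Aaronson's almost sure bound for $g\in L^{\alpha-\epsilon}$ when $\alpha<1$;
\item for $\frac1\alpha+\gamma<1$, the centered error $(\Phi-\mu_Y(\Phi))-(\Psi-\mu_Y(\Psi))$ is killed already at scale $n^{\frac1\alpha+\epsilon}$ by Theorem~\ref{distribution}.
\end{itemize}
Slutsky then transfers the stable law from $\Psi$ to $\Phi$. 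Alternatively, you could cite a version of the induced stable law that tolerates Lipschitz constants growing along the partition, under a condition like $\sum_a\mu_Y(a)(D_a\Phi)^\eta<\infty$. That holds here, since $\sum_n n^{-1-\frac1\gamma}n^{\frac{\eta}{\alpha\gamma}}<\infty$ for small $\eta$.

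\textbf{Two smaller points on Step~3.} First, $n^{-1/\beta}\max_{j\le n}\sum_{k<\tau\circ F^j}|\phi|\circ T_\gamma^k$ does not tend to $0$ here, because single excursions produce the heavy tail. So a Melbourne--T\"or\"ok type criterion is unavailable. What works is Proposition~\ref{gouezel-lifting}(b) with a window $\alpha(n)$ of order $n^{\max\{\gamma,1/2\}}$ (up to logarithms). For this window, $\max_{k\le\alpha(n)}|S_k\Phi|=o_P(n^{1/\beta})$, by monotonicity when $\beta<1$ and by the ergodic theorem when $\beta>1$.

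Second, your final-excursion heuristic reaches the right conclusion, but that excursion is size-biased. Its tail is $\asymp t^{-\beta(1-\gamma)}$, not that of $\Phi$. It is negligible because this gives probability $\asymp n^{\gamma-1}$ at level $n^{1/\beta}$.

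You are right that $\frac1\alpha+\gamma=1$ needs separate treatment; the paper's proof also only covers $\frac1\alpha+\gamma\neq1$.
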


\begin{rmk}
Note that the stable index  $[\frac{1}{\alpha}+\gamma]^{-1}$ is not due to the unbounded density of the indifferent fixed point 
of form $h(x)\sim x^{-\gamma}$.
If we calculate
$\mu(x: x^{\frac{-1}{\alpha}}>t)\sim \int_0^{t^{-\alpha}} x^{-\gamma}dx\sim t^{-\alpha(1-\gamma)}$ we obtain a stable index
of order $\alpha(1-\gamma)$ and not  $[\frac{1}{\alpha}+\gamma]^{-1}$. Note that $\frac{1}{\alpha}+\gamma< \frac{1}{\alpha(1-\gamma)}$
if and only if $\frac{1}{\alpha}+\gamma>1$. By~\cite[Theorem 2.3]{Carney_Nicol} if $\beta>\frac{1}{\alpha}+\gamma>1$ then
\[
\frac{1}{n^{\beta}} \sum_{j=0}^{n-1} \phi \circ T_{\gamma} (x)  \rightarrow 0
\]
for $\mu_{\gamma}$ a.e. $x$, so $\mu(x: x^{\frac{-1}{\alpha}}>t)\sim t^{-\alpha(1-\gamma)}$ does not give the correct scaling in the case
$\frac{1}{\alpha}+\gamma>1$.

\end{rmk}

Here is a brief outline of the paper. We isolate a subset
$M\subset \tilde{Q}$, away from the cusps but containing the point $x_0$
where $\phi$ is unbounded.  The return time function $R$ to $M$ has polynomial tails,
$\mu (R>n)\sim n^{-\gamma}$. The return time map $F:=T^R: M\to M$ has the structure of a Young Tower with
exponential tails. We decompose the observable $\phi:\tilde{Q}\to\R$,
 as
\[
  \text{$\phi=\phi_1+\phi_2$ where $\phi_1=\phi \one_{M^c}$ and
    $\phi_2=\phi \one_{M}$.}
\]
The point of this decomposition is to isolate the two effects: heavy tails and slow mixing.

For $\phi_1:\tilde{Q}\to \R$, Jung and Zhang~\cite{Jung_Zhang2018} (see also Jung, \Pene~and Zhang~\cite{Jung_Pene_Zhang2020}, Melbourne and Varandas \cite{Melbourne_Varandas2020})
prove a stable law of index $1/\gamma$, provided not all quantities $I_{\tilde{\phi},i}$, $i\in \Jmax$, vanish.   We show $\phi_1$ induces as a locally H\"older continuous
function, $\sum_{j=0}^{R-1} \phi_1\circ T^j$,  on  the partition elements of $M$  and so fits into the setting in which the results of~\cite{Jung_Zhang2018} hold.   Since the scaled Birkhoff sums of the induced function converge in the $J_1$ topology
to  a L\'evy process of index $\frac{1}{\gamma}$ in the $J_1$ topology the induced limit law may be lifted to that of the original observable
$\phi_1$ by the continuous mapping theorem.

Next  we induce  $\phi_2:M\to \R$ to the observable $\Phi_2=\sum_{j=0}^{R-1} \phi_2\circ T^j$.
We show that   under the
induced map $F:M\to M$,  a suitably scaled Birkhoff sum $n^{\frac{-1}{\alpha}} \sum_{j=1}^n(\Phi_2\circ F^j-c)$
converges in distribution to a  stable law of index $\alpha$, see
Section~\ref{sec:stable-laws}. In the case $1<\alpha<2$ this necessitates proving a form of  ``vanishing small values'' condition 
in Theorem 8.2.

Consider now the cases in Theorem~\ref{main1}:

\begin{itemize}
\item [(a)] $\frac{1}{\gamma}>\alpha$; we lift the stable law for $F$ on
  $M$ to a stable law for the billiard map $T:\tilde{Q}\to\tilde{Q}$ using
  a result of Gou\"ezel~\cite{Gouezel_Doubling}, see
  Proposition~\ref{gouezel-lifting}. Combining the two stable laws, the
  one that dominates is that of index $\alpha$ (a use of Slutsky's theorem).

\item [(b)] since $\frac{1}{\gamma}<\alpha$, the $\frac{1}{\gamma}$-stable law of
  $\phi_1$ dominates.
  
  \end{itemize}

In Theorem~\ref{main2} we use Theorem~\ref{distribution} to lift an induced $\alpha$ stable limit law
to the original observable, as standard lifting techniques (the Continuous Mapping Theorem or conditions
such as those of Proposition~\ref{gouezel-lifting} ) do not apply.

We use Theorem~\ref{distribution} to extend the results of \cite{CNT2025} for heavy-tailed observables
on LSV maps $(T_{\gamma},[0,1],\mu_{\gamma})$ in Proposition 10.1 to the entire parameter region $0<\gamma <1$, $0<\alpha<2$ ($\alpha \not =1$). In Theorem~\ref{cusp_intermittent} we consider a heavy tailed observable maximized at the indifferent fixed point: $\phi (x)=x^{-\frac{1}{\alpha}}-\mu_{\gamma}(x^{-\frac{1}{\alpha}})$
(if $ \mu_{\gamma}(x^{-\frac{1}{\alpha}})<\infty$ or equivalently if $\frac{1}{\alpha}+\gamma<1$); or $\phi (x)=x^{-\frac{1}{\alpha}}$ (if $\mu_{\gamma}$ is not integrable). We extend the results of Gou\"ezel to obtain a stable law
of index $[\frac{1}{\alpha}+\gamma]^{-1}$ for all $0<\gamma <1$, $0<\alpha<2$ ($\alpha \not =1$).
The results pertaining to billiards are given in Sections 6 to 9, Section 4 handles intermittent maps and 
Section 11 provides abstract lifting tools.

\section{Lifting an induced system by the Continuous Mapping
  Theorem}\label{sec:lifting-continuous-mapping}

If our induced system converges in the $J_1$ or $M_1$ topology to a
continuous time stable L\'evy process lifting to a stable law for the original observable may be done
with the Continuous Mapping Theorem. We use Theorem~\ref{distribution} to lift
if we do not have, or cannot prove, convergence in  the $M_1$ or $J_1$ topologies (due to the process switching
signs or other reasons). Suppose $\alpha\in (0,2)$, $\alpha \not =1$.
Suppose the continuous time version of our induced system $(\Phi_2, M,\mu_M)$ converges in Skorokhod's
$J_1$ ($M_1$) topology, so that for $t\in [0,2]$
\[
\frac{1}{n^{\frac{1}{\alpha}} }\sum_{j=0}^{\lfloor nt\rfloor}( \Phi_2\circ F^j- \mu_M (\Phi_2)
\]
converges to a continuous time  stable L\'evy process of index $\alpha$ in the
$J_1$ ($M_1$) topology. If $\alpha\in (0,1)$ we do not need to normalize.

By the ergodic theorem
\[
\frac{R_n(x)}{n}\to \frac{1}{\mu(M)}:=\bar{R}
\] 
for $\mu$ a.e. $x\in M$.


We define the following sequence of random variables on $M$, indexed by
$t\in[0,2]$
\[
  z_n (t) (x)= \left\{
    \begin{array}{ll}
      \frac{R_n(x)}{\bar{R}n}t& \mbox{if $t \in [0,1]$};\\
      2+ (2-\frac{R_n(x)}{\bar{R}n})(t-2)& \mbox{if $t\in [1,2]$}.
    \end{array}
  \right.
\]

The sequence $z_n(t)$ converges in $C[0,2]$ to the identity function $z(t)=t$. This convergence
holds in the uniform topology since
the sequence converges $\mu$ a.e. and hence weakly. The composition map
\[
  (f,g)\to f\circ g
\]
is continuous from $D([0,2])\times C([0,2])$ to $D([0,2])$ in the $J_1$ ($M_1$)
topology. 

By the Continuous Mapping Theorem
\[
  (\frac{R_n(x)}{\bar{R}})^{\frac{-1}{\alpha}} \sum_{j=0}^{\lfloor \frac{R_n(x)}{\bar{R}}t\rfloor}( \phi_2\circ F^j-\mu(\phi_2))
\]
converges to a continuous time stable process of index $\alpha$ in  the $J_1$ ($M_1$) topology.

Thus for $t=1$
\[
\lim_{n\to \infty}   n^{\frac{-1}{\alpha}} \sum_{j=0}^{n}( \phi_2\circ T^j -\mu (\phi_2))
\]

\[
=\lim_{n\to \infty}   (\frac{R_n(x)}{\bar{R}})^{\frac{-1}{\alpha}} \sum_{j=0}^{[ \frac{R_n(x)}{\bar{R}}t]}( \phi_2-\mu(\phi_2))
\]
converges in distribution to a stable law of index $\alpha$.
    
By results of~\cite[Theorem 2.2]{Jung_Pene_Zhang2020}, under additional
assumptions
\[
  n^{-\gamma} \sum_{j=0}^{[nt]}(\phi_1\circ T^j -\mu (\phi_1)
\]
satisfies a functional stable law, and converges in the $M_1$ topology
to a  stable L\'evy process of index $\frac{1}{\gamma}$.

In our setting, the point process associated to the induced version of $\phi_2$ on M,
denoted $\Phi_2$, converges to a compound Poisson Point Process as a consequence of 
results of \Pene{} and Saussol~\cite{Pene-Saussol2020}. If $0<\alpha<1$
then by a result of Davis and Hsing~\cite{DH95}, $\Phi_2$ satisfies a
stable law under suitable scaling but if $\alpha \in (1,2)$ then to obtain
a stable law we must also verify a ``vanishing small values''
condition~\cite[Theorem 3.1]{DH95}. This we do for generic $x_0$ in
Theorem~\ref{thm:mixing-condition_Young-Tower}. If we had convergence of
the continuous time version of $\Phi_2$ in the $M_1$  topology it would allow us to lift, via the Continuous
Mapping Theorem, to convergence of $\phi_2$ to a stable law on $\tilde{Q}$
but not automatically  to a stable L\'evy process. 
Conditions under which a compound Poisson Point process converges to a
stable L\'evy process in the $M_1$ topology have been given by Basrak et
al.~\cite[Theorem 3.4]{Basrak}. Once again in the case $\alpha \in (0,1)$
it is automatic to obtain convergence to a functional stable L\'evy process
in the $M_1$ topology~\cite[Theorem 3.4]{Basrak}. But in the case
$\alpha \in (1,2)$ to obtain convergence it is necessary, to invoke
~\cite[Theorem 3.4]{Basrak}, to verify a ``vanishing small values''
condition~\cite[Condition 3.3]{Basrak} which is stricter than that
of~\cite[Theorem 3.1]{DH95}. We are not able to verify~\cite[Condition
3.3]{Basrak} in our setting. Thus in the case $0<\alpha<1$ we have convergence 
of the associated continuous time process to a stable L\'evy process
in the $M_1$ topology and via the Continuous Mapping Theorem we
are able to lift to a stable law of the same index on the original system.
In the case $1<\alpha <2$ we are not able to show convergence in the $M_1$ topology.
Instead we verify a ``vanishing small values'' condition in Theorem 8.2 and prove convergence
to a stable law.  Then we lift the stable law from the induced system to the original in Section 9. Thus we restrict our results to a
discussion of stable law convergence.

\section{The Tower construction}\label{sec:tower-construction}

A very useful inducing scheme has been given by~\cite{Markarian04,CZ2005},
by removing areas of weak hyperbolicity we induce on a subset such that the induced
map is uniformly hyperbolic and has good mixing properties. We then lift statistical limit laws
from the induced system to the original.

To be concrete, fix an integer
$K_0$ and define $M$ as the set
\[
  M=\{p\in \tilde{Q} : \mbox{ less than $K_0$ successive forward collisions
    of $p$ in any cusp} \}
\]
We consider the first return time $R: M\to \Z$ and the induced map
\[
F=T^R :M\to M
\]
The measure $\mu_M=\frac{1}{\mu(M)}\mu$ is preserved by $F$.

It is known that $(F,M,\mu_M)$ may be modeled by a Young Tower with
exponential tails~\cite{CZ2005}.

We consider an observable $\phi: \tilde{Q}\to \R$ of form

\[
  \phi (x)= d(x,x_0)^{-\frac{2}{\alpha}}
\]
where $d(x,x')=|r-r'|+|\theta-\theta'|$, $x=(r,\theta)$, $x'=(r',\theta')$.

As long as $x_0$ is not a cusp point  we may, by
appropriate choice of $M$, assume that    $x_0$ lies in the inducing set $M$. Later we will make
a  generic ``no short returns'' condition on $x_0$. Generic here means belonging to a set of full measure.

If $0<\alpha <1$ we decompose 
\[
  \phi= \phi_1 +\phi_2
\]
where 
\[
\phi_1=\phi_{| M^c}
\]
and 
\[
\phi_2= \phi_{|M}
\]
If $1<\alpha <2$ we decompose
\[
\phi- \mu (\phi)= (\phi_1 -\mu (\phi_1 ) +(\phi_2-\mu (\phi_2)
\]

From work of Jung et al.~\cite{Jung_Pene_Zhang2020} we know that
\[
  \frac{1}{n^{\gamma} } \sum_{j=1}^n( [\phi_1 \circ T^j -\mu (\phi_1)\dto
  \sum_{i \in \Jmax, I_{\tilde{\phi},i}\not =0} X^{(i)}_{\gamma^{-1}}
\]
provided not all the quantities $I_{\tilde{\phi},i}$, $i\in \Jmax$,
vanish.

The function $\phi_1-\mu (\phi)$   fits into the setting in which the results of~\cite{Jung_Zhang2018} hold.  The same 
analysis as in~\cite[Section 2.2]{Jung_Zhang2018} proves that $\phi_1$ induces as a locally H\"older continuous
function, $\sum_{j=0}^R \phi_1\circ T^j$,  on  the partition elements of $M$. In fact $\phi_1\equiv 0$ on $M$ and equals $\phi$ on $M^c$
so there is no modification needed to their proof.



As in~\cite{CNT2025} we will investigate the interplay between the
$\alpha$-stable law arising from the heavy-tailed observable $\phi_2$ and
the $1/\gamma$-stable law arising from $\phi_1$ due to the slow-mixing of
the billiard.

Induce $\phi_2$ on $M$. As $\phi_2=0$ on $M^c$, the induced version of
$\phi_2$,
\begin{equation*}
  \Phi_2:=\sum_{j=0}^{R(p)-1} \phi_2 \circ T^j:M\to \R
\end{equation*}
equals $\phi_2$, i.e. $\Phi_2=\phi|_M$.

Next we show that $\Phi_2$ satisfies a stable law (or convergence in $J_1$
topology) under $(F,M,\mu_M)$. This follows directly from the
work~\cite{Pene-Saussol2020} in the case $0<\alpha<1$. In the parameter
range $1<\alpha <2$ we still rely on~\cite{Pene-Saussol2020} but must also
verify a mixing condition akin to~\cite[Theorem 3.1]{DH95} in the setting
of a uniformly hyperbolic system with singularities modeled by
exponentially mixing Young Tower. This second case uses recurrence
estimates for generic points given in~\cite{GuptaHollandNicol2011}.

We then prove that we can lift the stable law (or $J_1$ convergence) from
$\Phi_2$ under $(F,M,\mu_M)$ to a corresponding stable law for $\phi_2$
under $(T,\tilde{Q},\mu)$ in case (a) of Theorem~\ref{main1} and in Theorem~\ref{main2}.

\section{Convergence of $\left(j/n,(\Phi_2 \circ F^{j-1}-c_n)/b_n\right)$ to a  Poisson process}

In the setting of a uniformly hyperbolic dynamical system with
singularities which is modeled by a Young Tower,
\Pene{} and Saussol have verified the assumptions of~\cite[Hypothesis
3.1]{Pene-Saussol2020}. The induced dynamics $(F,M,\mu_M)$ fits into this scheme,
satisfies the assumptions of~\cite[Hypothesis 3.1]{Pene-Saussol2020}. The results of ~\cite{Pene-Saussol2020}
concern hits to a shrinking ball $A_{\epsilon}$ which we take as sequence of balls  $U_n(x_0)$ of radius approximately
$\frac{1}{\delta\sqrt{n}}$, $0<\delta<1$, about a generic
point $x_0$ (in particular satisfying the short returns condition of Theorem 8.2 so that $x_0$ is 
aperiodic under $F$.). Our map $H_n$, playing the role
of $H_{\epsilon}$ in~\cite[Section 2.1]{Pene-Saussol2020} will be the map $\frac{1}{b_n} \Phi_2: U_n (x_0)\to [\delta^{\frac{2}{\alpha}},\infty)$. 
Here the role of $V$ is taken by $[\delta^{\frac{2}{\alpha}},\infty)$. The measure $m_{\epsilon}$ on $V$ in our setting is 
the measure $\Pi_{\alpha}$ with density $C\alpha x^{-\alpha-1}$.


Define the counting process $N_n$ on $[0,1]\times [\delta^{\frac{\alpha}{2}},\infty)$ by
\[
  N_n (J \times B)=\#  \{ j\le n:( j/n,(\Phi_2 \circ F^{j-1}-c_n)/b_n) )\in
    J\times B \}
\]
For each $x\in M$, $N_n (x) $ is an integer valued counting process
on $[0,1] \times [\delta^{\frac{\alpha}{2}},\infty)$. It counts the number of hits to 
$U_n (x_0)$ and also where they occur in $U_n (x_0)$, which determines the counting process  on 
$[\delta^{\frac{2}{\alpha}},\infty)$.

Under our assumption that $x_0$ is generic~\cite[Theorem 2.1]{Pene-Saussol2020} shows that 
$N_n$ converges to a Poisson process with intensity $m \times \Pi_{\alpha}$ on $[0,1] \times [\delta^{\frac{\alpha}{2}},\infty)$.

As $\delta$ was arbitrary this  implies that the related 
counting process on $\R\setminus\{0\}$ (i.e. we take $J=[0,1]$ in the 
first component) defined by, for $B\in \R\setminus\{0\}$,
\[
  N_n (B) =\# \{ j \in [1,n]: (\Phi_2 \circ F^{j-1}-c_n)/b_n \in B \}
\]
converges to a process which has the general form of~\cite[Corollary
2.4]{DH95}, namely
\begin{equation}\label{eq:counting-process-convergence}
  N_n \to N=\sum_{i=0}^{\infty} \sum_{j=1}^{\infty} \delta_{P_i}
\end{equation}
where $\sum_{j=1}^{\infty} \delta_{P_i}$ is a Poisson process with
intensity measure $\Pi_{\alpha}$.

\section{Stable laws}\label{sec:stable-laws}

If $0<\alpha <1$ then, by~\cite[Theorem 3.1]{DH95}, the existence of a
Poisson limit $N$ given in \eqref{eq:counting-process-convergence} implies
that $\{ \Phi_2\circ F^j\}$ satisfies a stable law of index $\alpha$. In
order to establish a stable law in the regime $1<\alpha <2$ it suffices,
by~\cite{DH95}, to show in addition that
\begin{equation}\label{eq:D9bis}
  \lim_{\epsilon \to 0} \limsup_{n\to
    \infty}\frac{n}{b_n^2}\sum_{j=1}^{n} \max\left\{0, \int_M \Phi_n \cdot \Phi_n\circ F^{j}
    d\mu\right\}=0,
\end{equation}
where, for a fixed $\epsilon>0$, we denote
\begin{equation}\label{eq:phi_n}
  \phi_n :=\phi_2\cdot \one_{\left\{|\phi_2| \le \epsilon b_n\right\}}
  \text{ and } \Phi_n := \phi_n - \mu (\phi_n).
\end{equation}

In general it is difficult to establish Equation~\eqref{eq:D9bis} directly
from decay of correlations. 
We will use the fact that $F:M\to M$ can be modeled by a Young Tower and
make generic assumptions on the short returns of $x_0$, which has been
verified in our setup in~\cite[Section 4.1.2]{GuptaHollandNicol2011}.

\begin{rmk}
  The Young Tower analyzed for the intermittent map in~\cite{CNT2025}
  allowed us to obtain results for every $x_0\not =0$ (where $0$ was the
  indifferent fixed point) since it has a better topological structure than
  the Young Tower for hyperbolic billiards. In fact, in the intermittent
  map case a Tower was constructed in which the preimage of $x_0$ appears
  in precisely one column of the Tower and every point $x$ lies in the
  Tower. In the billiard case, the Tower contains $\mu$ a.e. $x$ and there
  may be infinitely many preimages of $x_0$ in the Tower. For these technical reasons
  our results are limited to $\mu$ a.e. $x_0$ for polynomially mixing
  billiards.

\end{rmk}

\begin{thm}\label{thm:mixing-condition_Young-Tower}
  For $1<\alpha<2$, relation~\eqref{eq:D9bis} holds for the system
  $(F,M,\mu_M)$ considered above and the observable
  $\Phi_2=\phi_2=\phi|_M$, $\phi(x)=d(x,x_0)^{-\frac{2}{\alpha}}$,
  for a.e. $x_0$.

  Therefore 
  \[
    \frac{1}{n^{1/\alpha}}\sum_{k=1}^n (\Phi_2\circ F^k - \mu_M(\Phi_2))
    \dto X_\alpha
  \]
  where $ X_\alpha$ is a stable law of index $\alpha$.
\end{thm}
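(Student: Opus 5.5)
Since $\Phi_2=\phi|_M$ is already a function on $M$, the plan is to verify \eqref{eq:D9bis} and combine it with the Poisson convergence \eqref{eq:counting-process-convergence} through \cite[Theorem 3.1]{DH95}. Throughout, $b_n\sim n^{1/\alpha}$ and $\frac{n}{b_n^2}\sim b_n^{-\alpha}\cdot n\, b_n^{-2+\alpha}\approx n^{1-2/\alpha}$. By Proposition~\ref{prop:karamata}(c),
\[
\mu_M(\Phi_n^2)\le \mu_M(\phi_n^2)\sim \tfrac{\alpha}{2-\alpha}\epsilon^{2-\alpha}b_n^2/n .
\]
The only loss available is therefore $O(\epsilon^{2-\alpha})$ per unit of "effective lag", and the sum of correlations over $1\le j\le n$ must be controlled uniformly. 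I would split the sum at $j\le L\log n$ (short lags) versus $j>L\log n$ (long lags).

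\textbf{Long lags.} Here I use exponential decay of correlations for the Young Tower with exponential tails modelling $(F,M,\mu_M)$. The function $\phi_n$ is not Hölder, so I would replace it by a Lipschitz mollification $\tilde\phi_n$ at scale $n^{-K}$. The truncated observable is bounded by $\epsilon b_n$ and has Lipschitz constant at most a polynomial in $n$ (since $|\nabla\phi|\lesssim d^{-2/\alpha-1}$ on $\{\phi\le\epsilon b_n\}$, plus the jump at the truncation level, handled by a smooth cutoff). The approximation errors in $L^1$ are $O(n^{-K'})$. The correlation is then bounded by $C\,\mathrm{poly}(n)\,\theta^{j}$, which for $j>L\log n$ with $L$ large gives a total contribution $o(b_n^2/n)$. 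The singularity sets of $F$ require the usual Young-tower approach: correlation estimates for dynamically Hölder functions, or approximation along stable/unstable curves. I would cite the standard decay-of-correlations estimate for Lipschitz observables on such towers.

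\textbf{Short lags.} This is the main obstacle. For $1\le j\le L\log n$ I bound
\[
\Bigl|\int\Phi_n\,\Phi_n\circ F^j\Bigr|\le \int|\phi_n||\phi_n\circ F^j| + \mu(\phi_n)^2,
\]
and $\mu(\phi_n)^2\cdot\log n=O(\log n)=o(b_n^2/n)$ since $\alpha<2$. For the first term I decompose dyadically in the size of the values. On $\{\phi\approx 2^k\}$, i.e. on the annulus $d(x,x_0)\approx 2^{-k\alpha/2}$, I estimate $\mu(x\in A_k,\ F^jx\in A_l)\le\mu(B(x_0,r_k)\cap F^{-j}B(x_0,r_l))$. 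The generic no-short-returns condition of \cite[Section 4.1.2]{GuptaHollandNicol2011} gives, for a.e. $x_0$, a bound $\mu(B_r\cap F^{-j}B_r)\le \mu(B_r)^{1+\delta}$ for $j\le L|\log r|$. Combining the two-scale version of this bound with Cauchy–Schwarz in the off-diagonal dyadic terms, each short-lag correlation should be at most $C\,(b_n^2/n)\,n^{-\delta'}$ for some $\delta'>0$. Summing over $L\log n$ lags then still gives $o(b_n^2/n)$. The delicate points are the following:
\begin{itemize}
\item extracting the gain $\mu(B_r)^{\delta}$ uniformly across the scales $r$ relevant for $|\phi|\le\epsilon b_n$. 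Small values (large $r$) need no gain, since there the integrand is bounded, and I would handle them by plain decay of correlations once $\phi$ is bounded;
\item ensuring the genericity set of $x_0$ is countably defined, so that it has full measure.
\end{itemize}

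Since the sum $\frac{n}{b_n^2}\sum_j\max\{0,\cdot\}$ is then $o(1)$ for every fixed $\epsilon$, \eqref{eq:D9bis} holds. I then apply \cite[Theorem 3.1]{DH95} with the limit $N$ from \eqref{eq:counting-process-convergence} and the centering $c_n=n\mu_M(\Phi_2)$ of Definition~\ref{def:scaling-constants}. This gives the stated convergence of $n^{-1/\alpha}\sum_{k=1}^n(\Phi_2\circ F^k-\mu_M(\Phi_2))$ to an $\alpha$-stable $X_\alpha$.
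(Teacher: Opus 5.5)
Your proposal is correct and follows essentially the paper's route: Lipschitz approximation with exponential decay of correlations to reduce to lags $j\le k\log n$, discarding the $\mu(\phi_n)^2$ terms, the generic no-short-returns estimate of \cite{GuptaHollandNicol2011} for the large values, and then \cite[Theorem 3.1]{DH95}. The only difference is bookkeeping: instead of a dyadic multi-scale decomposition, the paper cuts once at $u_n=b_n^r$ with $r\in(1/(1+\delta),1)$ and bounds every term with a factor below $u_n$ by Cauchy--Schwarz. Note that it is Cauchy--Schwarz, not decay of correlations, that disposes of the small values at short lags. The both-large term is then bounded by $(\epsilon b_n)^2\mu(U_n\cap F^{-j}U_n)=O(\epsilon^2 b_n^2\mu(U_n)^{1+\delta})$, so the short-return bound is needed at only one scale.
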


\begin{proof}[Proof of Theorem~\ref{thm:mixing-condition_Young-Tower}]
  
  To obtain~\eqref{eq:D9bis}, by exponential decay of correlations of
  Lipschitz versus Lipschitz, we need only show that
\begin{equation}\label{eq:k-log-n}
  \lim_{\epsilon \to 0} \limsup_{n\to
    \infty}\frac{n}{b_n^2}\sum_{j=1}^{\lfloor k \log n
    \rfloor}\max\left\{0, \int_M \Phi_n \cdot \Phi_n\circ F^{j}d\mu\right\}=0, 
\end{equation}
where $k$ is independent of $n$ and $\epsilon$. To do this we approximate
$\Phi_n$ sufficiently close in the $L^1$ norm by a Lipschitz
function
$\Psi_n$, ($\|\Psi_n-\Phi_n\|_{L^1} \le \frac{1}{n^{2}}$ say) and then use
exponential decay of correlations, meaning that for
Lipschitz functions $f$ and $g$ on $M$,
\[
  |\mu_{M} (f \cdot g\circ F^j)-\mu_{M}(f)\mu_M (g)|\le
  C\theta^{j}\|f\|_{\Lip} \|g\|_{\Lip}
\]
for some $0 < \theta <1$. 

The first return map $F$ to the set $M$ has the structure of a uniformly
hyperbolic billiard system with discontinuities as described
in~\cite[Section 5]{Chernov_dispersing}.

Since $\mu$ is $F$-invariant, can rewrite the covariance
$\int \Phi_n\Phi_n\circ F^{j}d\mu$ as
$\mu(\phi_n \cdot \phi_n\circ F^{j})-[\mu(\phi_n)]^2$. Because
$\phi_2\in L^1(\mu)$, one can neglect the $[\mu(\phi_n)]^2$ terms
in~\eqref{eq:k-log-n} as their contribution is of order
\[
  \frac{n}{b_n^2} ( \mu(\phi_n))^2 \log{n} \le \frac{n}{b_n^2} (
  \mu(|\phi_2|))^2 \log{n} \sim ( \mu(|\phi_2|))^2 n^{1-\frac{2}{\alpha}}
  \log{n}
\]
and $\alpha < 2$.

Thus, it suffices to show
\begin{equation}\label{eq:D9sub}
  \lim_{\epsilon \to 0} \limsup_{n\to
    \infty}\frac{n}{b_n^2}\sum_{j=1}^{\lfloor k \log n \rfloor}\int_M
  |\phi_n| \cdot |\phi_n|\circ F^{j} d\mu=0 \text{\qquad with $\phi_n
    :=\phi_2\cdot \one_{\left\{|\phi_2| \le \epsilon b_n\right\}}$, see~\eqref{eq:phi_n}}
\end{equation}

Introduce
\begin{equation}\label{eq:U_n-mixing-proof}
  \frac {1}{2} < r < 1, \quad u_n:=b_n^r,  \quad U_n :=
  \{x\in M \;:\;|\phi_2(x)| \ge u_n\}.
\end{equation}
Since $\phi_2=\phi \one_M$ is in the domain of attraction of a stable law
with index $\alpha$ (see \eqref{eq:slow-variation1} in
Remark~\ref{rmk:usual-stable-description}),
\begin{equation*}
  \mu (U_n) =  u_n^{-\alpha} L(u_n).
\end{equation*}

From Karamata's Theorem~\ref{prop:karamata}, equation
\eqref{eq:slow-variation1} and that $u_n =b_n^{{r}}$, we have
\begin{equation}\label{eq:computation1}
  \int {\phi_n}^{2} d\mu = \int \phi^2 \cdot \onebr{|\phi|\le \epsilon b_n}
  d\mu \sim \frac{\alpha}{2-\alpha} (\epsilon b_n)^2  \mu (|\phi| > \epsilon
  b_n) = C_\alpha \epsilon^2 b_n^{2} (\epsilon b_n)^{-\alpha} L(\epsilon b_n)
\end{equation}
\begin{equation}\label{eq:computation2}
  \int_{U_n^c} {\phi}^{2} d\mu \sim \frac{\alpha}{2-\alpha} u_n^2
  \mu (|\phi| > u_n) = C_\alpha b_n^{2r} b_n^{-\alpha r} L(u_n)
\end{equation}
We decompose the sum of integrals in \eqref{eq:D9sub} as
${\rm (I)} + {\rm (II)} + {\rm (III)}$, where
\[ {\rm (I)} = \sum_{j= 1}^{\lfloor k \log n \rfloor} \int_{U_n \cap
    F^{-j}U_n} |\phi_n| \cdot |\phi_n|\circ 
F^{j} d\mu,
\]
\[ {\rm (II)} = \sum_{j= 1}^{\lfloor k \log n \rfloor} \int_{U_n \cap
    F^{-j}U_n^c} |\phi_n| \cdot |\phi_n|\circ F^{j} d\mu
\]
and
\[ {\rm (III)} = \sum_{j= 1}^{\lfloor k \log n \rfloor} \int_{U_n^c}
  |\phi_n| \cdot |\phi_n|\circ F^{j} d\mu.
\]

The sums (II) and (III) are dealt with as in the proof of ~\cite[Theorem
8.4]{CNT2025}. We repeat the calculations here for completeness. For (I) we
will use the Young Tower structure of $M$. Consider (II) and (III) first.

{For (III)}, using that $\mu$ is $F$-invariant, we have
\begin{equation}\label{eq:GM_III}
  \begin{aligned} 
    \int_{U_n^c} |\phi_n| \cdot |\phi_n| \circ F^j d\mu 
    \le \left( \int_{U_n^c} {\phi}^{2} d\mu \right)^{\frac 1 2}
    \left( \int \phi_n^2 \circ F^j d\mu \right)^{\frac 1 2} 
    = \left( \int_{U_n^c} {\phi}^{2} d\mu \right)^{\frac 1 2}
    \left(\int \phi_n^2  d\mu \right)^{\frac 1 2}.
  \end{aligned}
\end{equation}
Similarly, for (II),
\begin{equation}\label{eq:GM_II}
  \begin{aligned}
    \int_{U_n \cap F^{-j}U_n^c} |\phi_n| & \cdot
    |\phi_n| \circ F^{j} d\mu
     \le \left( \int \phi_n^2 d\mu \right)^{\frac 1 2}
      \left( \int_{F^{-j}U_n^c} {\phi}^{2} \circ F^{j} d\mu
      \right)^{\frac 1 2}\\
    & =
      \left( \int \phi_n^2 d\mu \right)^{\frac 1 2}
      \left( \int {(\phi}^{2} \cdot \one_{U_n^c})\circ F^jd\mu
      \right)^{\frac 1 2}
      =
      \left( \int \phi_n^2 d\mu \right)^{\frac 1 2}
      \left( \int {\phi}^{2} \cdot \one_{U_n^c}d \mu
      \right)^{\frac 1 2}\\
    &
      =
      \left( \int \phi_n^2 d\mu \right)^{\frac 1 2}
      \left( \int_{U_n^c} {\phi}^{2} d \mu
      \right)^{\frac 1 2}
  \end{aligned}
\end{equation}
By \eqref{eq:computation1} and \eqref{eq:computation2}
we obtain
\begin{equation*}
  \left(\int \phi_n^2 d\mu \right)^{\frac 1 2}
  \left( \int_{U_n^c} {\phi}^{2} d \mu
  \right)^{\frac 1 2} \le C_\alpha \epsilon^{1-\frac{\alpha}{2}}
  b_n^{(1-\frac{\alpha}{2})(1+r)} L(\epsilon b_n)^{1/2} L(b_n^r)^{1/2}
\end{equation*}
By \eqref{eq:tail1} and \eqref{eq:slow-variation1},
\[
  n\sim 1/\mu (|\phi|> b_n) = b_n^\alpha L(b_n)^{-1}
\]
which gives
\begin{equation*}
  \frac{n}{b_n^2}[{\rm (II)} + {\rm (III)}] \le 2C_\alpha k \epsilon^{1-\frac{\alpha}{2}}
  b_n^{-(1-\frac{\alpha}{2})(1-r)}\log{n} \cdot \left(\frac{L(\epsilon b_n)
      L(b_n^r)}{L(b_n)^2}\right)^{1/2} 
\end{equation*}
Since $L$ is slowly varying, it grows slower than any power (see
e.g.~\cite[Lemma~A.1]{CNT2025}),
so, because $r < 1$,

\begin{equation}\label{eq:II+III}
  \limsup_{n\to \infty}\frac{n}{b_n^2} [{\rm (II)} + {\rm (III)}] =0
\end{equation}

For the sum (I) we have to estimate
\begin{equation}\label{eq:mixing-without-removing-mean}
  \sum_{j= 1}^{\lfloor k \log n \rfloor} \int_{U_n \cap F^{-j}U_n}
  |\phi_n| \cdot |\phi_n|\circ F^{j} d\mu.
\end{equation}

To do this we will use the fact that $M$ has the structure of a Young Tower
for the uniformly hyperbolic billiard system with discontinuities, as
described in~\cite[Lemma 10]{Zhang2017}. We will also use results on the
genericity of points without short returns
from~\cite{GuptaHollandNicol2011}. We may suppose that $x_0\in \Delta_0$,
the base of the Young Tower, which is a ``hyperbolic horseshoe''. We
briefly describe the relevant features of this Tower construction. The base
$\Delta_0$ is a rectangle of local stable manifolds and local unstable
manifolds. There is a countable partition $\{A_k\}$ of $\Delta_0$ into
$s$-subsets such that each $W^u$ is partitioned into $A_k \cap W^u$ and
each $A_k$ contains $W^s(x)\cap \Delta_0$ for each $x\in A_k$. There exists
a return time $\tau(k)$ of $A_k$ such that
$F^{\tau(k)} A_k \subset \Delta_0$ and makes a full crossing of $\Delta_0$,
in sense that the local unstable manifolds are expanded by $G:=F^{\tau}$ to
traverse fully from one `vertical' boundary of $\Delta_0$ to the other. The
`vertical' boundaries of $\Delta_0$ consist of two local stable leaves. We
define $\tau(x)=\tau(k)$ if $x\in A_k$. The Tower is exponentially mixing
and $\mu (\tau >n)\le C\theta^n$ for some $0<\theta <1$.

 The local unstable manifolds in each $A_k$ expand uniformly in the
 sense that $|F^j W^u| \ge \lambda^j |W^u|$ for some $\lambda>1$ until
 they make a full crossing at time $\tau(k)$. The Tower $\Delta$
 consists of the set
 \[
   \{(x,j): 0\le j \le \tau(x) -1, \ x\in A_k, \ k=1,2,\ldots\}
 \]
 together with the map 
 \[
   f(x,\ell)= \left\{ \begin{array}{ll}
         (x,\ell+1)& \mbox{if $\ell < \tau(x)-1$};\\
        (Gx,0) & \mbox{if $\ell =\tau(x)-1$}.\end{array} \right.
 \] 
  
  We let  
  \[
    C_k:=\{(x,j): 0\le j \le \tau(x) -1,\ x\in A_k\}
  \]
  denote the column over the set $A_k \subset \Delta_0$.
  
  The projection $\pi :\Delta \to M$ projects the points of the Tower onto
  the manifold $M$,
  \[
    \pi(x,\ell)=F^\ell (x).
  \]
  The usual bounded distortion and absolute continuity estimates hold as
  in~\cite{Young_98}.
  
  In~\cite[Section 4.1.2.]{GuptaHollandNicol2011} it is shown that for
  $\mu$ a.e. $x_0$, there exists $\delta>0$ such that
  $\mu(U_n \cap F^{-j} U_n)=O(b_n^{-r(1+\delta)})$ for
  $j=1,\dots ,(\log n)^5$.
  
  Therefore for large $n$
  \[
  (I)\le (k\log n) b_n^2 \epsilon^2 b_n^{-\alpha r(1+\delta)}
  \]
  and 
  \[
 \frac{n}{b_n^2} (I)\le n^{1-r(1+\delta)}\epsilon^2
 \]
 Taking $r>\frac{1}{(1+\delta)}$ gives
 $\lim_{n\to \infty} \frac{n}{b_n^2} (I)\to 0$.

 Together with \eqref{eq:II+III}, this shows that condition
 \eqref{eq:D9sub} is satisfied.
\end{proof}

\section{Proof of Theorem~\ref{main1}-lifting the induced stable laws.}\label{sec:proof-main}

\subsection{Proof of Theorem~\ref{main1}, case (a):
  $\frac{1}{\alpha}> \gamma$.}\ \null

This is the most straightforward case.

As an application of the work of~\cite{Pene-Saussol2020} and by verifying
the condition \eqref{eq:D9bis}
for generic $x_0$ we have shown that
\[
  (\bar{R}n)^{-\frac{1}{\alpha}} \sum_{j=0}^{n}(\Phi_2\circ F^j-\mu_M
  [\Phi_2 ])
\]
converges on $M$ to a stable law $X_{\alpha}$ of index $\alpha$ for
$0<\alpha<2$, $\alpha\not =1$. We lift this limit law from the induced
systems $(F,M,\mu_M)$ to $(\tilde{Q},T,\mu)$, using~\cite[Theorem
4.6]{Gouezel_Doubling} (see section \ref{gouezel-lifting}). 

Meanwhile,
the observable $\phi_1-\mu[\phi_1]$ on $\tilde{Q}$ satisfies a stable law of index
$\gamma <\frac{1}{\alpha}$ if $I_{\tilde{\phi_1},i}\not =0$ for some $i\in \Jmax$, or 
its variance grows slower than $n^{2\gamma-\epsilon}$, in either
case  hence its contribution vanishes under scaling by $n^{\frac{1}{\alpha}}$.

We verify condition (b) of Gou\"{e}zel's
Proposition~\ref{gouezel-lifting} with

$\alpha(n)=n^{\gamma}$ and $B_n=(n\bar{R})^{\frac{1}{\alpha}}$. If
$1<\alpha<2$ then take $A_n=n\mu_M (\Phi_2)$, if $0<\alpha<1$ take $A_n=0$.
Recall that $\mu_M (\Phi_2)$ is the expectation on $(M,\mu_M)$, so
$\mu_M (\Phi_2)=\mu_M (R) \mu (\phi_2)$ because $\mu_M (R) = 1/\mu(M)$.

Note that $R$ satisfies a stable law of index $\frac{1}{\gamma}$ under $F$
(this result is well-known). Indeed, the return-time function $R$ is
constant on partition elements of $M$ and hence measurable with respect to
the partition on $M$. The distribution of $R$ is in the domain of
attraction of a stable law of index $\frac{1}{\gamma}$. Hence,
by~\cite[Corollary 4.3]{TK-dynamical},
\[
  \frac{1}{n^{\gamma}} \sum_{j=0}^{n} (R\circ F^j-\mu_M (R))
\]
converges on $M$ to a stable law of index $1/\gamma$, and is therefore a
tight sequence.

When $1<\alpha<2$ $\Phi_2$ is in $L^1$ and satisfies the Birkhoff
ergodic theorem. If we let $m=n^{\gamma}$ then
\[
  \frac{1}{n^\frac{1}{\alpha}} \sum_{j=1}^{n^{\gamma}} \Phi_2\circ F^j =
  \frac{1}{m^{1/(\gamma \alpha)}} \sum_{j=1}^{m} \Phi_2 \circ F^j
\]
and since $1/(\gamma\alpha)>1$ the latter sum converges to zero almost
surely. Therefore
\[
  \max_{k \leq n^{\gamma}} \frac{1}{n^{\alpha}} S_{k}(\Phi_2) \dto 0
\]

If $0 < \alpha < 1$ then $\Phi_2 > 0$ so
\[
  \max_{j \le n^{\gamma}} \frac{S_{j}(\Phi_2)}{n^{1/\alpha}} =
  \frac{S_{n^{\gamma}}(\Phi_2)}{n^{1/\alpha}}\dto 0
\]
because $n^{\gamma} < n$ and ${S_{n}(\Phi_2)}/{n^{1/\alpha}} \dto X_{\alpha}$.

Therefore, we can apply Proposition~\ref{gouezel-lifting}, part (b) to
conclude that on  $\tilde{Q}$
\begin{equation*}
  n^{-\frac{1}{\alpha}}
  \sum_{j=0}^{n-1} (\phi_2 \circ T^j - \mu (\phi_2)) \dto X_{\alpha}
\end{equation*}

Now $\phi_1-\mu (\phi)$ satisfies a stable law with scaling $n^{\gamma}$  if $I_{\tilde{\phi},i}\not =0$ for
some $i\in \Jmax$ or its Birkhoff sum converges in distribution to zero under scaling by $n^{-\gamma}$.
In either case the scaled Birkhoff sum
$n^{-\frac{1}{\alpha}} \sum_{j=0}^{n}[\phi_1 \circ T^j -\mu (\phi_1)]$
converges in distribution to zero.

This proves that
\[
  n^{-\frac{1}{\alpha} } \left(\sum_{j=1}^{n} (\phi_1 \circ T^j -
    \mu(\phi_1)) + (\phi_2 \circ T^j - \mu(\phi_2)) \right)
  =n^{-\frac{1}{\alpha} } \sum_{j=1}^{n} (\phi \circ T^j - \mu(\phi)) \dto
  X_{\alpha}
\]
where $X_{\alpha}$ has a stable distribution with index $\alpha$.

\subsection{Proof of Theorem~\ref{main1}, case(b):
  $\gamma > \frac{1}{\alpha}$,
  $I_{\tilde{\phi},i}  \not = 0 \text{ for some }i\in \Jmax$}\ \null

This result may be proved in the same way as   Theorem~\ref{main2}, but we give a more
elementary proof that does not rely on Theorem~\ref{distribution}.
By \cite{Jung_Pene_Zhang2020}, if we induce  the  observable $\tilde{\phi_1}(x)=\phi_1(x)-\mu (\phi)$ on $M$
it has the same limit law under scaling by $n^{-\gamma}$ as 
\[
\sum_{i\in J} \frac{I_{\tilde{\phi_1},i}}{I_i} (R_i (x)-\mu_{M} (R_i))
\]

 We  now show that it is possible to 
lift the $\alpha$-stable law of $\Phi_2$ on  $(F,M,\mu_M)$ given by
Theorem~\ref{thm:mixing-condition_Young-Tower} to an $\alpha$-stable law of  $\phi_2$ on $(T,\tilde{Q},\mu)$.  Under the norming  $n^{-\gamma}$  the contribution of this  Birkhoff sum tends to zero in distribution.

Since $\gamma>\frac{1}{\alpha}$ we must have  $\alpha \in (1,2)$ and thus $\phi_2$
is integrable. Hence for any integer $P$
\begin{equation}\label{forward}
  \sup_{k<PN}\frac{1}{N}\sum_{j=1}^k (\phi_2\circ T^j -\mu ( \phi_2) ) \dto 0
\end{equation}

Since $T$ is invertible  and $T^{-1}$ preserves $\mu$ we also have
\begin{equation}\label{backward}
  \sup_{k<PN}\frac{1}{N}\sum_{j=1}^k (\phi_2\circ T^{-j} -\mu ( \phi_2))
  \dto 0
\end{equation}

Moreover, since both $T$ and $T^{-1}$ preserve $\mu$, for any $N$ the
distribution of
\[
  \sup_{k<PN}\frac{1}{N}\sum_{j=1}^k (\phi_2\circ T^j -\mu (\phi_2))
\]
is the same as the distribution of 
\[
  \sup_{k<PN}\frac{1}{N}\sum_{j=n\bar{R}}^{k+n\bar{R}} (\phi_2\circ T^j
  -\bE[\phi_2])
\]
and the distribution of
\[
  \sup_{k<PN}\frac{1}{N}\sum_{j=1}^k (\phi_2\circ T^{-j} -\mu (\phi_2))
\]
is the same as the distribution of 
\[
  \sup_{k<PN}\frac{1}{N}\sum_{j=n\bar{R}}^{k+n\bar{R}} (\phi_2\circ T^{-j}
  -\mu (\phi_2 ))
\]

We consider the following quantities defined pointwise on $M$ (recall that
$\Phi_2=\phi_2|_M$ and $\mu_M=\bar{R}\mu$) .

Case 1: $n\bar{R}>R_n(x)$.

\[
\sum_{j=1}^n
  (\Phi_2\circ F^j(x)-\mu (\Phi_2))
  \]
  \[
  =(\sum_{j=1}^{R_n(x)} \phi_2\circ T^j )-n\bar{R}\mu (\phi_2)
  \]
  \[
  =\sum_{j=1}^{n\bar{R}} (\phi_2\circ T^j -\mu (\phi_2))
  \]
  \[
  -\sum_{j=R_n(x)+1}^{\bar{R}n} (\phi_2\circ T^j -\mu (\phi_2)) 
  \]
  \[
  +\mu ( \phi_2) (R_n (x)-n\bar{R})
  \]
  Case 2: $R_n(x)>n\bar{R}$.

\[
\sum_{j=1}^n
  (\Phi_2\circ F^j(x)-\mu_M (\Phi_2))
  \]
  \[
  =(\sum_{j=1}^{R_n(x)} \phi_2\circ T^j )-n\bar{R}\mu (\phi_2)
  \]
  \[
  =\sum_{j=1}^{n\bar{R}} (\phi_2\circ T^j -\mu (\phi_2))
  \]
  \[
  +\sum^{j=R_n(x)}_{\bar{R}n+1} (\phi_2\circ T^j -\mu (\phi_2)) 
  \]
  \[
  +\mu (\phi_2) (R_n (x)-n\bar{R})
  \]
  
  Thus

\[
  \sum_{j=1}^{n\bar{R}}(\phi_2\circ T^j(x) -\mu (\phi_2)) =\sum_{j=1}^n
  (\Phi_2\circ F^j(x)-\mu_M (\Phi_2) ) + V_n(x) -\mu (\phi_2) (R_n (x)- n\bar{R})
\]
where 
\begin{equation}\label{eq:V_n-expression}
  V_n(x)=\begin{cases} \sum_{j=R_n(x)+1}^{n\bar{R}} (\phi_2\circ T^j(x)
    -\mu (\phi_2))
    &\text{\qquad if $R_n(x) \le n \bar{R}$}\\
    -\sum_{j=n\bar{R}+1}^{R_n(x)} (\phi_2\circ T^j(x) -\mu (\phi_2) )
    &\text{\qquad if $R_n(x) > n \bar{R}$}
  \end{cases}
\end{equation}

The quantity 
$-\mu (\phi_2) (R_n (x)- n\bar{R})$ is the Birkhoff sum of the observable $-\mu (\phi_2)(R(x)-\mu_M(R))$ under $F$.

Furthermore by Remark~\ref{constant}, if $\psi=-\mu (\phi_2)$ then
\[
\sum_{i\in J} \frac{I_{\psi,i}}{I_i} (R_i (x)-\mu_{M} (R_i))=-\mu (\phi_2) (R(x)-\mu_M(R))
\]

 Recall  the induction of the  observable $\tilde{\phi_1}(x)=\phi_1(x)-\mu (\phi_1 )$ on $M$ has the same limit law as 
\[
\sum_{i\in J} \frac{I_{\tilde{\phi_1},i}}{I_i} (R_i (x)-\mu_{M} (R_i))
\]
Furthermore
\[
\sum_{i\in J}  \frac{I_{\tilde{\phi_1},i}}{I_i} (R_i (x)-\mu_{M} (R_i)) -\mu (\phi_2) (R(x)-\mu_M(R))
\]
\[
=\sum_{i\in J}  \frac{I_{\tilde{\phi},i}}{I_i} (R_i (x)-\mu_{M} (R_i)) 
\]
where $\tilde{\phi}=\phi-\mu(\phi)$.
Under the condition $I_{\tilde{\phi},i}\not =0$ for some $i\in \Jmax$, by~\cite{Jung_Pene_Zhang2020}
the induced observable $\sum_{i\in J}  \frac{I_{\tilde{\phi},i}}{I_i} (R_i (x)-\mu_{M} (R_i))$ lifts to give a stable law
of index $\frac{1}{\gamma}$.

 We will now show that the other contribution from  the induction of $\phi_2-\mu (\phi_2)$
on $M$ converges to zero in distribution under the scaling $n^{-\gamma}$, that is $n^{-\gamma}\sum_{j=1}^n [(\Phi_2\circ F^j(x)-\mu_M (\Phi_2) )] + n^{-\gamma} V_n(x)$ converges to zero in distribution.

As in Case (a), with $R_n=\sum_{k=0}^{n-1}R\circ F^k$ and
$\bar{R}=\mu_M (R)$,
\[
  (R_n(x)-n\bar{R})/n^{\gamma}\dto X_{\frac{1}{\gamma}} \qquad \text{ on
    $M$}
\]
where $X_{1/\gamma}$ is a stable law of index $1/\gamma$.

Since $T$ preserves $\mu$, for any $n$ sufficiently large
\begin{equation}\label{addition}
 \mu (x\in M: |R_n(T^{-n\bar{R}})-\bar{R}n|>Pn^{\gamma} )<\epsilon
\end{equation}

 Therefore, for
any $\epsilon >0$ there exists an integer $P$ such that for all
sufficiently large $n\ge N$,
\[
\mu (x\in M: |R_n(x)-\bar{R}n|>Pn^{\gamma} ) <\epsilon
\]

The reason we consider
$\mu(x\in M: |R_n(T^{-n\bar{R}})-\bar{R}n|>Pn^{\gamma})$ is to bound
$|R_n(x)-\bar{R}n|$ in
\[
  \sum_{j=n\bar{R}}^{R_n(x)} (\phi_2\circ T^j(x) -\mu (\tphi_2) )
\]
and then run forwards in time under $T$ to estimate~\eqref{forward} and to
bound $|R_n(x)-\bar{R}n|$ in
\[
\sum_{j=R_n(x)}^{n\bar{R}} \phi_2\circ T^j -\mu ( \phi_2)
\] 
and then run backwards in time to estimate~\eqref{backward}.

By~\eqref{forward}, \eqref{backward} and \eqref{addition}
\begin{equation}\label{eq:V_n-estimate}
  \frac{1}{n^{\gamma}} V_n\dto 0.
\end{equation}

Recall that $\bE_M[\phi_2] =\bar{R}\mu (\phi_2)$. By
Theorem~\ref{thm:mixing-condition_Young-Tower} a stable law of index
$\alpha$ holds for $\Phi_2$ on the induced system, that is
\[
  \frac{1}{n^{\frac{1}{\alpha}}} \sum_{j=1}^n (\Phi_2\circ
  F^j-\mu_M (\Phi_2) ) \dto X_{\alpha} \text{ \quad on $M$}
\]

Since $\gamma>\frac{1}{\alpha}$, together with \eqref{eq:V_n-estimate} this
implies that
\[
  \frac{1}{n^{\gamma}} \sum_{j=1}^n (\phi_2\circ T^j-\mu (\phi_2) ) \dto 0
  \text{ \quad on $\tilde{Q}$}
\]

Hence 
\[
  n^{-\gamma} \sum_{j=0}^{n} (\phi \circ T^j -
  \mu (\phi ) )\dto X_{\frac{1}{\gamma}}
\]
where $X_{\frac{1}{\gamma}}$ is the distribution determined by $\phi$,
according to~\cite[Theorem 2.1]{Jung_Pene_Zhang2020}.

\subsection {Proof of Theorem~\ref{main2}: 
  $\gamma > \frac{1}{\alpha}$, and $\int \phi d\mu=0$ with $\phi\equiv 0$
  on $U$}\ \null

We assume that the induced version of $\phi$ on $M$ satisfies a stable law of index $\alpha$, that is
if 
$\Phi (x)=\sum_{j=1}^{R(x)-1}\phi\circ T^j (x)$ then 

\[
n^{-\frac{1}{\alpha}}\sum_{j=1}^n (\Phi\circ F^j-\mu_{M} (\Phi ))\dto X_{\alpha}
\]
where $X_{\alpha}$ is a stable law of index $\alpha$.

We furthermore assume that each cusp $P_i\in \Jmax$ has a neighborhood $O_i$ such
that $\phi \equiv 0$ on $U=\cup_{i\in \Jmax} O_i$.

We will show that the stable law may be lifted. In this setting 
 in our former decomposition $\phi_1\equiv 0$ so we need only consider
$\phi=\phi_2$, with the normalization $\int \phi_2 d\mu=\mu (\phi_2 )=0$. This case
does not reduce to Case (b) since $\phi_1$ (now zero) no longer satisfies
a stable law of index $\frac{1}{\gamma}$; we will show that we may lift the stable law
of index $\alpha$ obtained for $\Phi_2$ on $M$ to a stable law of index $\alpha$ for $\phi_2$ on $\tilde{Q}$.


The question becomes one of lifting the induced
stable law of index $\alpha$ to the whole space when the return time
function $R$ is slow and of stable index $\frac{1}{\gamma}< \alpha$.
This would be an application of the continuous mapping theorem in the case
that we have a functional stable limit law, but we give a more general
argument here. To lift we will use the fact that $(F, M, \mu_M)$ is
exponentially mixing with $B_1=B_2=Lip(\tilde{Q})$ and we use  Theorem~\ref{distribution}.


Again we  consider the following quantities defined on $M$,
\[
  \sum_{j=1}^{n\bar{R}}\phi_2\circ T^j(x) =\sum_{j=1}^n \phi_2\circ
  F^j(x) +V_n(x)
\]
where $V_n$ is 
\begin{equation}
  V_n(x)=\begin{cases} \sum_{j=R_n(x)+1}^{n\bar{R}} \phi_2\circ T^j(x)
    
    &\text{\qquad if $R_n(x) \le n \bar{R}$}\\
    -\sum_{j=n\bar{R}+1}^{R_n(x)} \phi_2\circ T^j(x) 
    &\text{\qquad if $R_n(x) > n \bar{R}$}
  \end{cases}
\end{equation}
as in \eqref{eq:V_n-expression}.


Since 
\[
  (R_n(x)-n\bar{R})/n^{\gamma}
\]
converges in distribution on $M$ to a stable law of index
$\frac{1}{\gamma}$, given $\epsilon >0$ there exists an integer $P$ such
that for all sufficiently large $n\ge N$,
\[
\mu_{M} (x\in M: |R_n(x)-\bar{R}n|>Pn^{\gamma} ) <\epsilon
\]
Now $\mu_M$ is preserved under $F$  (note $F^n  (x)=T^{R_n(x)}(x)$)  and 
$\mu$ is preserved under $T$. Thus we may assume that except for a set of $\mu_M$ measure at most $\epsilon$, 
$|R_n(x)-\bar{R}n|<Pn^{\gamma}$.

Let $\delta>0$, then 
 \[
 \mu_{M} \{ |V_n(x)|>\delta n^{\frac{1}{\alpha} } \}
 \]
 \[
\le \mu_M \{ x\in M:  \max_{0\le j\le  Pn^{\gamma} }  n^{-\frac{1}{\alpha}}  |\sum_{i=0}^{j} (\phi_2\circ T^j(x) |>\delta\} +\epsilon
    \]

Let $m=Pn^{\gamma}$ for $P$ and $n$ large. Then
$n^{\frac{1}{\alpha}}=(\frac{m}{P})^{\frac{1}{\alpha\gamma}}$. We rewrite
\[
  \frac{1}{n^{1/\alpha}} |\max_{k<Pn^{\gamma}}\sum_{j=0}^{k} \phi_2 \circ
  T^j|
 = \frac{P^{\frac{1}{\alpha\gamma} }}{m^{\frac{1}{\alpha\gamma}}}
  |\max_{k<m}\sum_{j=0}^{k} \phi_2 \circ T^j|
\]

For fixed $x\in M$ and $m$, let $k=k(x,m) < m$ be such that $F^k(x)$ is the
last entry of $x$ in $M$, i.e. $k$ is the largest integer satisfying
$R_k(x) < m$. Recall that $\phi_2=0$ outside $M$. Then
\[
  \sum_{j=0}^{m} \phi_2 \circ T^j(x) =\sum_{j=0}^{k(x,m)-1}
  \Phi_2(x)\circ F^j (x)
\]

Thus
\[
  \frac{1}{m^{\frac{1}{\alpha\gamma}}} |\max_{k<m}\sum_{j=0}^k \phi_2\circ
  T^j(x)| \le \frac{1}{\frac{1}{m^{\alpha\gamma}}
  }|\max_{k<m}\sum_{j=0}^{m} \Phi_2\circ F^j|
\]

Since $\gamma <1$, $\frac{1}{\gamma\alpha} > {\frac{1}{\alpha}}$. By
Theorem~\ref{distribution}
\[
  \frac{P^{\frac{1}{\alpha\gamma} }}{m^{\frac{1}{\alpha\gamma}}}
  \max_{k<m}\sum_{j=0}^{k} \Phi_2(x)\circ F^j \dto
0
\]
Thus  $\mu_{M} \{ |V_n(x)|>\delta n^{\frac{1}{\alpha} } \}\to 0$ and since $\epsilon>0$ and 
 $\delta>0$ were arbitrary this concludes the proof.

\subsection{Proof of Proposition~\ref{thm:intermittent}}
The cases $(a)$ and $(b)$ were proved in~\cite{CNT2025} and we list them here for completeness. 
  The  new result is (c). The same proof as in Theorem 4.2 holds.  We  may use Theorem~\ref{distribution}  as we have exponential mixing for the induced
  system (which is Gibbs-Markov) in the Banach spaces $B_1=BV$ versus $B_2=L_{\infty}$.

\subsection{Proof of Theorem~\ref{cusp_intermittent}}

    We induce the LSV map \eqref{IM} with $\gamma\in (0,1)$ onto
  $Y=[1/2, 1]$. Let $C_j=(y_{j+1},y_j)$ be the partition element in
  $[1/2,1]$ corresponding to a first return time of $j$. Let
  $x_j=T_1^{-j} (1/2) \in[0,1/2)$be the preimage of $1/2$  under the first branch of the map, so that
  $x_j\sim j^{-\frac{1}{\gamma}}$ and  $x_{j-1}=T(y_j)$.

  Let $\phi(x)=x^{-\frac{1}{\alpha}} -\mu_{\gamma} (x^{-\frac{1}{\alpha}})$ (if $x^{-\frac{1}{\alpha}}$ is integrable) or $x^{- \frac{1}{\alpha}} $ if not.
 
  By the mean value theorem, for $0\le j\le n$,
  \[
    |\phi(x_{j+1})- \phi(x_j)|\le (j^{-\frac{1}{\gamma}})^{-\frac{1}{\alpha}-1}
    |j^{-\frac{1}{\gamma}}-(j+1)^{-\frac{1}{\gamma}}|\sim
    j^{\frac{1}{\gamma\alpha}-1}
  \]
  as, for large $j$, 
  $|j^{-\frac{1}{\gamma}}-(j+1)^{-\frac{1}{\gamma}}| \sim
  j^{-\frac{1}{\gamma}-1}$.

Consider the locally constant function, defined on $[1/2,1]$, 
\[
  \Psi(x) =\sum_{j=1}^n \phi(x_j) \qquad \text{for
    $x\in C_n=(y_{n+1},y_n)\subset [1/2, 1]$}
\]
Note that  $\Psi(x)=\Psi (y_n)$ for all $x\in (y_{n+1},y_n)$.
We extend $\Psi$ to $[0,1]$ by $\Psi(x)=x_j$ for $x\in (x_{j+1},x_j]$.
Similarly, let
\[
  \Phi(x)=\sum_{j=1}^n \phi(T^j(x)) \qquad \text{for $x\in C_n$ }
\]
be $\phi$ induced on the base element $C_n$ of the tower. The function $\Phi$ is not constant on 
$C_n$.
 
 We calculate, for $x\in C_n$,
 \[
   |\Psi (x)-\Phi (x)|\le \sum_{j=1}^n j^{\frac{1}{\alpha\gamma}-1}\le C
   n^{\frac{1}{\alpha\gamma}} 
 \]
 
 On each base element $C_n$ of the tower we have the constant function
 $\Psi$. We calculate
 \[
   \Psi(y_n)=\sum_{j=1}^n (j^{-\frac{1}{\gamma}})^{-\frac{1}{\alpha}}\sim
   n^{1+\frac{1}{\alpha\gamma}}
 \]
 so if $x\in C_n$ then
 \[
   \Psi(x) \sim n^{1+\frac{1}{\alpha\gamma}}
 \]
 Furthermore each $C_n$ has Lebesgue measure
 $m(C_n)=n^{-1-\frac{1}{\gamma}}$. The invariant measure $\mu$ of the LSV
 map is equivalent to Lebesgue measure on the base of the tower.
 
 We solve for $b_n$, i.e. to have scaling $nm(\Psi >b_n)=1$,
 \[
 m(C_j: \Psi_{|C_j} >b_n)=\frac{1}{n}
 \]
 \[
 =m(j^{1+\frac{1}{\alpha\gamma}} > b_n)=\frac{1}{n}
 \]
 or equivalently we solve for 
 \[
 m(j>(b_n)^{\frac{\alpha\gamma}{1+\alpha\gamma}})=\frac{1}{n}
 \]
 Since for any $t$, 
 \[
 \sum_{j=t}^{\infty} j^{-1-\frac{1}{\gamma}}\sim t^{-\frac{1}{\gamma}}
 \]
 we calculate
 \[
 (b_n)^{\frac{\alpha\gamma}{1+\gamma\alpha}(-\frac{1}{\gamma})}=n^{-1}
 \]
 which yields
 \[
 b_n=n^{\frac{1}{\alpha}+\gamma}
 \]

We let $g$ be a  function on the base which is constant on each $C_j$ and
has value 
\[
g|_{C_j}=g(j):=j^{\frac{1}{\alpha\gamma}}
\]

Note that if $x\in C_j$ then
 \[
   |\Phi(x)-\Psi(x)|\le g(j)
 \]
 
 If we compute the stable index of $g$ we find
 \[
   m(g>t) \sim m(j>t^{\alpha\gamma}) =\sum_{j=t^{\alpha\gamma}}^{\infty}
   j^{-1-\frac{1}{\gamma}}=t^{-\alpha}
 \]
Since $m(C_j)=j^{-1-\frac{1}{\gamma}}$ and
$g(j)\le j^{\frac{1}{\alpha\gamma}}$ we see that
\[
  \sum_{j=1}^{\infty} g(j)m(C_j) < \infty \qquad \text{if $1<\alpha<2$}
\]

Let
$F: [1/2,1]\to [1/2,1]$ be the Gibbs-Markov base map induced on the base of
the tower. We write $\Lambda:=[1/2,1]$ and note that $F$ has an invariant Lebesgue
equivalent measure $m_{\Lambda}$ on $\Lambda$ for which $F$ has exponential
decay of correlations in $B_1=BV$ versus $B_2=L^{\infty}$.

 If $\frac{1}{\alpha}+\gamma<1$ then $\Psi$, $\Phi$ and $g$ are integrable.
 
The function $ G(x):=(\Phi (x)-m_{\Lambda} (\Phi) )-(\Psi(x)-m_{\Lambda}(\Psi))$ has stable index $\alpha$.

We will show, via Theorem~\ref{distribution}, that $n^{-(\frac{1}{\alpha}+\gamma)}\sum_{j=0}^{n-1} G\circ F^j\dto 0$.
Since $n^{-(\frac{1}{\alpha}+\gamma)}\sum_{j=0}^{n-1}(\Psi(F^j x)-m_{\Lambda}(\Psi))\dto X_{\frac{\alpha}{1+\gamma\alpha}}$
by Slutsky's Theorem $n^{-(\frac{1}{\alpha}+\gamma)}\sum_{j=0}^{n-1}\Phi (F^j x)-m_{\Lambda} (\Phi)\dto X_{\frac{\alpha}{1+\gamma\alpha}}$.
 Then we will lift the induced stable law to the ambient system.

 First we note that on $C_j$, $|G(x)|<j^{\frac{1}{\alpha\gamma}}$. Let $\epsilon>0$, we solve for $j^{\frac{1}{\alpha\gamma}}>n^{\frac{1}{\alpha}+\epsilon}$
 and obtain $j> n^{\gamma+\epsilon\gamma\alpha}$. We truncate $G(x)$ to
 \[ \psi_n (x) = \left\{ \begin{array}{ll}
         G(x) & \mbox{if $x \in C_j, j<n^{\gamma+\epsilon\gamma\alpha}$};\\
        0 & \mbox{if otherwise}.\end{array} \right. \]

        Since $\psi_n \in BV[1/2,1]$ in Theorem~\ref{distribution} we define $f_n=\psi_n-\mu(\psi_n)$ and conditions (a), (c) and (d) of Theorem~\ref{distribution}
        are trivially satisfied.  It is easy to calculate that $\|f_n\|_{BV}\le \sum_{j=1}^{n^{\gamma+\epsilon\gamma\alpha}} j^{\frac{1}{\alpha\gamma}}\sim n^{\frac{1}{\alpha}+\gamma+\epsilon(1+ \gamma\alpha)}$. So we take $\kappa=\frac{1}{\alpha}+\gamma+\epsilon(1+ \gamma\alpha)$ 
        in Theorem~\ref{distribution} and conclude that $n^{-(\frac{1}{\alpha}+\gamma)}\sum_{j=0}^{n-1} G\circ F^j\dto 0$. The techniques of the proof
        of Theorem~\ref{main2} allow us to lift the stable law for the induced system, $n^{-(\frac{1}{\alpha}+\gamma)}\sum_{j=0}^{n-1}\Phi (F^j x)-m_{\Lambda} (\Phi)\dto X_{\frac{\alpha}{1+\gamma\alpha}}$, to a stable law for $\phi$ of the same index.

Finally if $\frac{1}{\alpha}+\gamma >1$ then 
no normalization for $\Psi$ or $\Phi$  is needed. If $\alpha\in (1,2)$ then $g$ is integrable and since
$b_n=n^{\frac{1}{\alpha}+\gamma}$,
\[
  b_n^{-1} \sum_{j=1}^n g\circ F^j \to 0 \qquad \text{$m$ a.e.}
\]
Hence 
\[
b_n^{-1} \max_{k\le n} \sum_{j=1}^k g\circ F^j \to 0 \qquad \text{$m$ a.e.}
\]
so 
\[
b_n^{-1} \max_{k\le n} \sum_{j=1}^k |\Phi\circ F^j -\Psi\circ F^j| \to 0 \qquad \text{$m$ a.e.}
\]
Thus by (b) of Proposition~\ref{gouezel_lifting}.
\[
b_n^{-1} \sum_{j=1}^n (\Phi\circ F^j -\Psi\circ F^j)
\]
lifts to $[0,1]$ and converges to zero in distribution. As 
\[
b_n^{-1} \sum_{j=1}^n \Psi \circ T^j
\]
converges to a stable law of index $(\frac{1}{\alpha}+\gamma)^{-1}$ and 
 \[
b_n^{-1} \sum_{j=1}^n (\Psi\circ T^j -\phi\circ T^j)\dto 0
\]
by Slutsky's theorem 
\[
b_n^{-1} \sum_{j=1}^n \phi\circ T^j
\]
converges to a stable law of index $(\frac{1}{\alpha}+\gamma)^{-1}$.

If $\alpha \in (0,1)$ then $g$ is not integrable but $\int g^{\alpha-\epsilon} dm <\infty$ for any $\epsilon>0$, and then 
\[
n^{1/(\alpha-\epsilon)}\sum_{j=1}^{n} g\circ F^j \to 0  \qquad \text{$m$ a.e.}
\]
by Aaronson~\cite[Proposition 2.3.1]{Aaronson}.  The rest of the argument proceeds as in the previous case.

  \section{Discussion and open problems.}
  
  We were unable to determine the stable limit law of a heavy tailed  observable maximized at 
  the cusp of a Machta billiard table. However, preliminary work suggests that the stable index
  will not be equal to $(\frac{1}{\alpha}+\gamma)^{-1}$. Similarly,  calculations on a class
  of polynomially intermittent maps which  preserve Lebesgue measure, extensively investigated in
  ~\cite{Lebesgue_IM} suggest that the addition formula $(\frac{1}{\alpha}+\gamma)^{-1}$ is not general, but 
  depends upon how close orbits come to the cusp or indifferent fixed point before returning to the inducing set. 
  It would be good to have a better understanding of any general principles which are involved. 
  
  Theorem~\ref{distribution} shows that under strong mixing assumptions on $(T,X,\mu)$ if $\phi$ is in the domain of 
  attraction of a  stable law of index $\alpha$,
  then $\frac{1}{n^{\frac{1}{\alpha}+\epsilon}} [\sum_{j=0}^{n}\phi\circ T^j -c_n]\dto 0$ (in distribution). Is there a corresponding almost sure convergence result under suitable mixing assumptions? Some related partial results
  are given in~\cite{Merlevede_IM,Balint_Terhesiu}. The case of iid random variables of stable index $\alpha$ is 
  given in~\cite{Chover}.

	\subsection{Data availability statement.}
	
This paper is proof based and o datasets were generated or analyzed during the current study.

\section{Appendix}

\subsection{A convergence in distribution result}

We will use a result in Billingsley~\cite[Exercise
10.3]{Billingsley1999},\cite[p. 156]{Doob1953}; (see also~\cite[Page
1228]{Serfling1970}).


\begin{prop}[{\cite[Exercise 10.3]{Billingsley1999}}]\label{Billingsley}
  Let $\nu\ge 1$ and $\gamma\ge 1$. Suppose $X_i$ is a sequence of random
  $L^2$ variables on a probability space
  and there exist non-negative numbers $\{u_i\}$ such that for all
  $0 \le a \le a+m \le n$, 
  \[
    \mu \left(|\sum_{i=a}^{a+m} X_i |^{\nu}\right)\le (\sum_{i=a}^{a+m}
    u_i)^{\gamma}
  \]
  Then
  \[
    \mu \left( \max_{0\le j\le m} |\sum_{i=a}^{a+j} X_i |^{\nu}\right) \le
    (\log_2 4m)^{\nu} (\sum_{i=a}^{a+m} u_i)^{\gamma} \qquad \text{for
      $0 \le a \le a+m \le n$.}
  \]
\end{prop}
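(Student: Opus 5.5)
We prove the bound by a dyadic chaining argument, which is the classical route for this exercise. The idea is to dominate the maximum of the partial sums by a sum, over at most $\log_2 4m$ dyadic levels, of level-wise maxima of block sums. Minkowski's inequality then lets us pass the $L^\nu$ norm through this sum. Superadditivity of $x\mapsto x^\gamma$ (this is where $\gamma\ge 1$ is used) controls each level by the full mass $U:=\sum_{i=a}^{a+m}u_i$. We assume $m\ge 1$; for $m=0$ the maximum is just $|X_a|$ and the claim reads as the hypothesis, up to the convention for the logarithmic factor.

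\emph{Step 1 (dyadic decomposition).} Fix $a$ and $m$, and write $S(I):=\sum_{i\in I}X_i$ for an index interval $I$. For $k\ge 0$, a \emph{level-$k$ block} is an interval $\{a+l2^k,\dots,a+(l+1)2^k-1\}$ contained in $\{a,\dots,a+m\}$. For $0\le j\le m$, expand $j+1=\sum_{k\in K_j}2^k$ in binary. Listing the blocks of sizes $2^k$, $k\in K_j$, in decreasing order of size starting at $a$ partitions $\{a,\dots,a+j\}$. Each block of size $2^k$ starts at $a$ plus a multiple of $2^k$, because all earlier blocks have larger dyadic lengths. Hence every piece is a level-$k$ block for the appropriate $k$. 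The number of levels involved is at most $L:=\lfloor\log_2(m+1)\rfloor+1$, and $L\le\log_2 4m$ for $m\ge1$. Setting $Y_k:=\max\{|S(B)|: B \text{ a level-}k\text{ block}\}$, we get
\[
\max_{0\le j\le m}\Big|\sum_{i=a}^{a+j}X_i\Big|\le \sum_{k=0}^{L-1}Y_k .
\]

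\emph{Step 2 (one level).} Since $\nu\ge1$, Minkowski's inequality gives $\|\max_j|S|\|_{L^\nu}\le\sum_{k<L}\|Y_k\|_{L^\nu}$. For fixed $k$, bound the $\nu$-th power of the maximum by the sum over the disjoint level-$k$ blocks, and apply the hypothesis to each block, which is an interval of the form $[a',a'+m']\subset[a,a+m]$:
\[
\mu(Y_k^\nu)\le\sum_{B}\mu(|S(B)|^\nu)\le\sum_B\Big(\sum_{i\in B}u_i\Big)^{\gamma}\le\Big(\sum_B\sum_{i\in B}u_i\Big)^{\gamma}\le U^\gamma .
\]
The third inequality uses $u_i\ge0$ and $\gamma\ge1$, via $\sum x_l^\gamma\le(\sum x_l)^\gamma$.

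\emph{Step 3 (conclusion).} Combining the two steps gives $\|\max_j|\sum_{i=a}^{a+j}X_i|\|_{L^\nu}\le L\,U^{\gamma/\nu}$. Raising to the power $\nu$ and using $L\le\log_2 4m$ yields the claim. The only delicate point is Step 1: we must check that the binary-expansion blocks really are aligned dyadic blocks lying inside $[a,a+m]$, so that the per-level maxima range over disjoint intervals and the superadditivity bound applies.
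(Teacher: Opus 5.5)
Your proof is correct. The binary-expansion blocks are aligned level-$k$ blocks inside $\{a,\dots,a+m\}$, and the number of levels, $\lfloor\log_2(m+1)\rfloor+1$, is at most $\log_2 4m$ for $m\ge 1$. Minkowski's inequality (where $\nu\ge 1$ enters) and $\sum_B x_B^\gamma\le(\sum_B x_B)^\gamma$ over disjoint blocks (where $\gamma\ge1$ and $u_i\ge 0$ enter) then close the bound. Your caveat about $m=0$ is also right, since $\log_2 4m$ is undefined there, and the $L^2$ assumption is never used.

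The paper does not prove this proposition; it only cites Billingsley, Doob and Serfling. The classical argument behind those references (for instance Serfling's) is recursive. One splits $\{a,\dots,a+m\}$ into a left and a right half, and bounds the maximal partial sum by the absolute value of the left-half sum plus the larger of the two half-maxima. One then applies Minkowski, the induction hypothesis on each half, and superadditivity of $x\mapsto x^\gamma$. Each halving adds one unit to the $L^\nu$ constant, which produces the logarithm.

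Your dyadic chaining, in Rademacher--Menshov style, is the unrolled form of that recursion and rests on the same two ingredients. Your version is direct, shows immediately that the constant counts dyadic levels, and gives the slightly sharper factor $(\lfloor\log_2(m+1)\rfloor+1)^\nu$. The recursive version lets the split point be chosen adaptively, for example by halving the $u$-mass rather than the number of terms. That adaptivity is what removes the logarithm when $\gamma>1$ and the weights $u_i$ are uneven.
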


\begin{thm}\label{distribution}
  Let $(F,X,\mu)$ be an ergodic dynamical system with $X$ a Riemannian
  manifold of dimension $\dimX$ and $\mu$ an invariant probability measure.
  Let $B_1$ and $B_2$ be Banach spaces. Suppose that $(F,X,\mu)$ has
  exponential decay of correlations for $B_1$ versus $B_2$ of the form:
  there are $C>0$ and $0<\theta<1$ such that
  \[
    |\int f g\circ F^n d \mu - \mu (f)  \mu ( g) |\le C\theta^n
    \|f\|_{B_1} \|g\|_{B_2}, \qquad n \ge 0
  \]

 Let $\phi(x)$ be an observable on $(X,\mu)$ with stable index
 $\alpha \in (1, 2)$.
 Let $\epsilon>0$. Truncate $\phi$ by defining
 $\psi_n:=\phi \one_{\{\phi<n^{1/\alpha +\epsilon}\}}$ and
 $\bar{\psi}_n:=\psi_n-\bE_\mu (\psi_n )$.

 Assume there exist approximations $f_n \in B_{i}$, $i=1,2$, to
 $\bar{\psi}_n$ with the properties:
 \begin{itemize}
 \item[(a)] $\|\bar{\psi}_n-f_n \|_{L^1(\mu)}< C n^{-4-\frac{1}{\alpha}}$
 \item[(b)] $\|f_n\|_{B_i}<C n^{\kappa}$ ($i=1,2$) \mbox{for some $\kappa>0$}
 \item[(c)] $\|f_n\|_{\infty}\le C \|\bar{\psi}_n\|_{\infty}$. 
 \item[(d)] $\mu(f_n)=0$
 \end{itemize}
 where $C>0$.
 
 (The approximations $F_n$ depend on $\epsilon$ but for ease of notation we don't indicate this dependence).
 Then 
  \begin{equation}\label{eq:conv-in-distribution}
    \frac{1}{n^{1/\alpha+\epsilon}}
    \max_{j\le n}\sum_{i=1}^{j} \left[{\phi}\circ F^i - \mu (\phi)\right]
    \dto 0
      \end{equation}
\end{thm}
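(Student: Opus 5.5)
We split the centered Birkhoff sum into a truncated bulk part and a large-value remainder:
\[
\sum_{i=1}^{j}[\phi\circ F^i-\mu(\phi)] = \sum_{i=1}^{j}\bar\psi_n\circ F^i + \sum_{i=1}^{j}\bigl[(\phi-\psi_n)\circ F^i - \mu(\phi-\psi_n)\bigr].
\]
Write $a_n:=n^{1/\alpha+\epsilon}$.

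\emph{The remainder.} Since $\phi-\psi_n$ is nonzero only on $\{\phi\ge a_n\}$, we have
\[
\mu\Bigl(\bigcup_{i\le n}F^{-i}\{\phi\ge a_n\}\Bigr)\le n\,\mu(\phi\ge a_n)=n\,a_n^{-\alpha}L(a_n)\to 0,
\]
because $n\cdot n^{-1-\alpha\epsilon}L\to0$. So off a set of vanishing measure the uncentered remainder vanishes for every $j\le n$. The centering term is bounded by $n\,\mu(|\phi|\one_{\{\phi\ge a_n\}})$. By Karamata (Proposition~\ref{prop:karamata}(e)) this is of order $n\,a_n^{1-\alpha}L(a_n)$, and $n\,a_n^{1-\alpha}/a_n = n\,a_n^{-\alpha}\to0$. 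Hence the remainder is $o(a_n)$ uniformly in $j\le n$, in probability. The lower tail $\{\phi\le -t\}$ should be truncated as well; we handle it identically, reading $\psi_n$ as truncation of $|\phi|$.

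\emph{The bulk part.} Replace $\bar\psi_n$ by $f_n$. By invariance and (a), $\mu\bigl(\max_{j\le n}|\sum_{i\le j}(\bar\psi_n-f_n)\circ F^i|\bigr)\le n\|\bar\psi_n-f_n\|_{L^1}\le Cn^{-3-1/\alpha}$, so Markov's inequality makes this negligible. For $S_j=\sum_{i\le j}f_n\circ F^i$ we bound second moments of block sums, $\mu(|\sum_{i=a}^{a+m}f_n\circ F^i|^2)$, to feed into Proposition~\ref{Billingsley} with $\nu=2$, $\gamma=1$, $u_i\equiv u$. Expanding, we get $m\,\mu(f_n^2)$ plus $2\sum_{k=1}^{m}(m-k)|\mu(f_n\,f_n\circ F^k)|$. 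Split the lags at $K=c\log n$ with $c$ large. For $k>K$, use decay of correlations together with (b), (d): each term is at most $C\theta^k n^{2\kappa}\le n^{-10}$ once $c\log\theta^{-1}>2\kappa+10$. For $k\le K$, Cauchy--Schwarz gives $|\mu(f_nf_n\circ F^k)|\le\mu(f_n^2)$. Here $\mu(f_n^2)\le\|f_n\|_\infty\|f_n\|_{L^1}$, and $\|f_n\|_{L^1}\le\|\bar\psi_n\|_{L^1}+Cn^{-4}$, while $\|f_n\|_\infty\le C\|\bar\psi_n\|_\infty\le Ca_n$ by (c). It is better to estimate $\mu(f_n^2)\le 2\mu(\bar\psi_n^2)+2\|f_n-\bar\psi_n\|_\infty\|f_n-\bar\psi_n\|_{L^1}$, which by Karamata (b) and (a), (c) is $\le C a_n^{2-\alpha}L(a_n)+Ca_n n^{-4-1/\alpha}$. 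Altogether,
\[
\mu\Bigl(\bigl|\textstyle\sum_{i=a}^{a+m}f_n\circ F^i\bigr|^2\Bigr)\le m\,u,\qquad u:=C\log n\; a_n^{2-\alpha}L(a_n)+n^{-8}.
\]
Proposition~\ref{Billingsley} then gives $\mu(\max_{j\le n}S_j^2)\le(\log_2 4n)^2\,n\,u$. By Chebyshev,
\[
\mu\Bigl(\max_{j\le n}|S_j|>\delta a_n\Bigr)\le \delta^{-2}(\log n)^3\,n\,a_n^{-\alpha}L(a_n)\cdot C = C\delta^{-2}(\log n)^3 n^{-\alpha\epsilon}L(a_n)\to0.
\]

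\emph{Main obstacle.} The delicate point is the short-lag regime $k\le K$. There we have nothing better than Cauchy--Schwarz, which costs a factor $\log n$. The argument survives only because the truncation level $a_n$ sits $n^{\epsilon}$ above $b_n$, which produces the polynomial gain $n^{-\alpha\epsilon}$ that absorbs all the logarithms. A second point to check is that the $B_1$ versus $B_2$ decay really applies to the pair $(f_n,f_n)$, which is why (b) is required in both norms. Combining the three pieces gives \eqref{eq:conv-in-distribution}.
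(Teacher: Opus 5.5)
Your proposal is correct and follows the paper's proof essentially step for step. The steps are the same: truncation at $n^{1/\alpha+\epsilon}$, handled by a union bound and a tail estimate for the large values; second moments of block sums, controlled by exponential decay of correlations beyond a lag of order $\log n$ and by Cauchy--Schwarz below it; then Proposition~\ref{Billingsley} with $\nu=2$, $\gamma=1$, followed by Chebyshev.

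The differences are cosmetic. You replace $\overline{\psi}_n$ by $f_n$ in the whole Birkhoff sum at the outset, which is why you need the extra bound on $\mu(f_n^2)$, whereas the paper keeps $\overline{\psi}_n$ as the summands and uses $f_n$ only in the long-lag covariances. Your remark about also truncating the lower tail makes explicit an assumption that the paper's Karamata bound on $\mu(\psi_n^2)$ tacitly needs.
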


\begin{rmk}\label{rmk:Banach-pairs}
  The Banach spaces, in applications, are often the pairs: $B_1=\Lip(X)$,
  $B_2=\Lip(X)$ (invertible case); $B_1=BV(X)$, $B_2=L^1(X)$ or
  $B_1=\Lip(X)$, $B_2=L^1(X)$ (non-invertible case).

\end{rmk}

For observables similar to those  we consider in this paper, we have  the following
corollary.

\begin{cor}\label{distribution-corollary}
  Consider the dynamical system $(F,X,\mu)$ described in
  Theorem~\ref{distribution}, with $X$ of dimension $D$.

  Assume the observable $\phi:X\to \R$ of stable index $\alpha\in (1,2)$ is
  given by
  \[
    \phi(x)=\sum_{i=1}^{p} C_i d(x,x_i)^{-\frac{D}{\alpha}}, \quad
    C_i\in (-\infty,\infty)
  \]
  and that $\mu$ is equivalent to Lebesgue in a neighborhood of each point
  $x_i \in X$.

  Then
  \begin{equation}\label{eq:conv-in-distribution_all-epsilon}
    \frac{1}{n^{1/\alpha+\epsilon}}
    \max_{j\le n}\sum_{i=1}^{j} \left[{\phi}\circ F^i - \mu (\phi)\right]
    \dto 0 \qquad \text{ for any $\epsilon >0$.}
  \end{equation}
\end{cor}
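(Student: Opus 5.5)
The corollary follows from Theorem~\ref{distribution} once we build approximations $f_n$ with properties (a)--(d). For Lipschitz-type spaces ($B_1=B_2=\Lip(X)$, or $\Lip$ versus $L^1$) we construct them by mollifying the truncated observable. Fix $\epsilon>0$, set $t_n:=n^{1/\alpha+\epsilon}$, and take $\psi_n=\phi\one_{\{\phi<t_n\}}$ and $\bar\psi_n=\psi_n-\mu(\psi_n)$ as in the theorem.

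Since the sign of the $C_i$ may vary, it is cleaner to work with a two-sided truncation. We replace $\phi$ by $\phi\one_{\{|\phi|<t_n\}}$. The difference from the one-sided truncation is supported on $\{\phi\le -t_n\}$, which has measure $O(t_n^{-\alpha})=o(1/n)$. So with probability tending to one this set is not visited in the first $n$ steps. The change in the mean is $O(t_n^{1-\alpha})$, and multiplied by $n/t_n$ this is $n^{-\epsilon\alpha}\to0$. Hence the difference does not affect \eqref{eq:conv-in-distribution}. Near each $x_i$ the function $\phi$ is comparable to $C_id(x,x_i)^{-D/\alpha}$, so the set $\{|\phi|\ge t_n\}$ is contained in a union of balls $B(x_i,\rho_n)$ with $\rho_n\asymp t_n^{-\alpha/D}$. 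Outside these balls $\phi$ is smooth with $\Lip(\phi)\lesssim \rho_n^{-D/\alpha-1}$, which is polynomial in $n$.

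The construction of $f_n$ proceeds as follows. We define a Lipschitz function $g_n$ that equals $\phi$ outside $\bigcup_i B(x_i,\rho_n(1+\eta_n))$, equals $0$ on $\bigcup_i B(x_i,\rho_n)$, and interpolates radially in between. Here $\eta_n=n^{-K}$ for a large $K$. We then set $f_n=g_n-\mu(g_n)$, which gives (d) immediately. Property (c) holds because $|g_n|\le\sup_{\{|\phi|<t_n\}}|\phi|\lesssim t_n\asymp\|\bar\psi_n\|_\infty$, up to constants. The Lipschitz constant of $g_n$ is at most $t_n/(\rho_n\eta_n)+\Lip(\phi|_{\text{outside}})$, and both terms are polynomial in $n$; this gives (b) with some $\kappa$. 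For (a), $g_n$ differs from the truncated function only on the annuli $\{\rho_n\le d(x,x_i)\le\rho_n(1+\eta_n)\}$, together with a thin shell near the level set $\{|\phi|=t_n\}$ if the level set is not exactly a sphere. We handle this by interpolating across the level set of $\phi$ itself rather than radially. Since $\mu$ is equivalent to Lebesgue near $x_i$, these regions have measure $O(\rho_n^D\eta_n)$, and the integrand there is bounded by $2t_n$. The means then differ by the same order, so $\|\bar\psi_n-f_n\|_{L^1}\lesssim t_n\rho_n^D\eta_n$. Choosing $K$ large makes this smaller than $n^{-4-1/\alpha}$.

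The main obstacle is the geometry of the sublevel set when several singular points interact or when terms have opposite signs. Near $x_i$ the term $C_id(x,x_i)^{-D/\alpha}$ dominates, and the other terms are bounded and Lipschitz there. So the level set $\{|\phi|=t_n\}$ is a Lipschitz perturbation of a small sphere and has surface measure $O(\rho_n^{D-1})$. I would verify this through the coarea formula, using that $|\nabla\phi|\gtrsim\rho_n^{-D/\alpha-1}$ on the shell. This gives the needed bound on the measure of the $\eta_n$-neighbourhood. If $C_i=0$ for some $i$, that point is simply dropped. Once (a)--(d) are in place, Theorem~\ref{distribution} yields \eqref{eq:conv-in-distribution_all-epsilon}.
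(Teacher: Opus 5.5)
Your proposal is correct, and its core device is the paper's: replace the sharp cutoff at level $t_n=n^{1/\alpha+\epsilon}$ by a linear interpolation across a layer of polynomially small width, so that the Lipschitz norm is polynomial in $n$ while the $L^1$ error is below $n^{-4-1/\alpha}$, and then subtract the mean. The difference is in how $p>1$ and mixed signs are handled.

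The paper reduces at once to $p=1$, $C_1=1$. This is legitimate because the proof of Theorem~\ref{distribution}, via Billingsley's maximal inequality, actually controls $\max_{j\le n}|\sum_{i\le j}(\cdot)|$, which is subadditive in the observable and homogeneous under scaling. After this reduction $\phi\ge 0$, so the one- and two-sided truncations coincide. The truncation boundary is also an exact metric sphere, so the transition layer is an explicit annulus (a triangle when $D=1$) and no geometry is needed.

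You keep the full observable, and this has two costs.
\begin{enumerate}
\item You need the level-set analysis. The crude bound $|\phi-C_i d(\cdot,x_i)^{-D/\alpha}|\le K$ only places $\{|\phi|\approx t_n\}$ in an annulus of relative width $K/t_n$. That gives an $L^1$ error of order $n^{-1-\alpha\epsilon}$, which is too large, so your radial-monotonicity/coarea step is genuinely needed.
\item You need the two-sided truncation, and here your justification compares the wrong things. Showing that the two truncations produce asymptotically the same Birkhoff sums relates \emph{conclusions}. But (a)--(d) in Theorem~\ref{distribution} are hypotheses on the one-sided $\overline{\psi}_n$. When some $C_i<0$, that function is unbounded below and not in $L^2$. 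The theorem's argument uses $\mu(\overline{\psi}_n^2)\lesssim n^{2/\alpha-1+\epsilon(2-\alpha)}$ and a finite $\|\overline{\psi}_n\|_\infty$, so it does not apply to the one-sided $\overline{\psi}_n$. What you actually need, and what is true, is that the proof of Theorem~\ref{distribution} runs verbatim with $\{|\phi|<t_n\}$ in place of $\{\phi<t_n\}$. It only uses the tail bound, the Karamata $L^2$ bound and $\|\overline{\psi}_n\|_\infty\lesssim t_n$.
\end{enumerate}

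In return, your route verifies the hypotheses for the combined observable in one pass, and it extends to a singular term plus any Lipschitz perturbation. The paper's route is much shorter, but it tacitly relies on the absolute-value form of the theorem's conclusion.
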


\begin{proof}[Proof of Corollary~\ref{distribution-corollary}]
  
  Given $\epsilon >0$ small, we describe the approximations of
  Theorem~\ref{distribution} in Lipschitz  versus Lipschitz in dimension $D=1$ (other dimensions are
  similar with obvious modifications).

   It suffices to consider the case $p=1$, $C_1=1$. 
  Then
  \[
    \psi_n(x)=d(x,x_1)^{-\frac{1}{\alpha}} \cdot
    \one_{\{d(x,x_1)>n^{-(1+\alpha\epsilon)}\}}
  \]

  Let $g_n$ be $\psi_n$ with the addition (in cross-section) of a triangle
  where $\psi_n$ is truncated from $\phi$. Take the base of this triangle
  to be $n^{-5-2/\alpha}$, so the area of the triangle is
  $O(n^{-4-1/\alpha})$. Therefore
  $\|\psi_n - g_n\|_{L^1}=O(n^{-4-1/\alpha})$.

  The $L^\infty$ bound is clearly satisfied, and the Lipschitz norm of
  $g_n$ is given by the slope of the added line, so satisfies the
  requirement as well.

  Finally, let $f_n=g_n-\mu(g_n)$.
  \end{proof}

\begin{proof}[Proof of Theorem~\ref{distribution}]

Throughout our proof $C$ will be used as a constant which may vary in
places but is independent of indices. Recall we assume $\phi$ satisfies
\begin{equation*}
    \mu(|\phi|> x) =x^{-\alpha} L(x)
  \end{equation*}
  and
  \begin{equation*}
    \lim_{x\to\infty} \frac{\mu(\phi > x)}{\mu(|\phi|> x)} = p
  \end{equation*}
  for a slowly varying function $L(x)$ and $0\le p \le 1$.  The slowly varying function will play no role and 
p  to simplify the exposition we ignore $L(x)$.

Let $\epsilon>0$. Truncate $\phi$ by defining
$\psi_n:=\phi \one_{(\phi<n^{\frac{1}{\alpha } +\epsilon})}$ and
$\bar{\psi}_n=\psi_n-\mu (\psi_n )$. Define $\bar{\phi}=\phi-\mu (\phi)$. Note that
$\mu(|\phi|>n^{\frac{1}{\alpha }+\epsilon})\sim
(n^{\frac{1}{\alpha}+\epsilon})^{-\alpha}=n^{-1-\alpha\epsilon}$.

By the regular variation of the tails, 
\begin{equation}\label{L1diff}
  \mu (|\phi-\psi_n| ) \le \int_{n^{\frac{1}{\alpha}+\epsilon}}^\infty
  \mu(|\phi|>t)d t = \int_{n^{\frac{1}{\alpha}+\epsilon}}^\infty
  t^{-\alpha} L(t) d t 
  \le Cn^{\frac{1}{\alpha}-1}
\end{equation}

Thus 
\[
  n^{-(\frac{1}{\alpha}+\epsilon) } \sum_{j=1}^n
  (\mu (\phi )-\mu ( \psi_n ))\le Cn^{-\epsilon}
  \]
and so
\begin{equation}\label{means}
n^{-(\frac{1}{\alpha}+\epsilon)} \sum_{j=1}^n (\mu (\phi )-\mu (\psi_n ))\to 0
\end{equation}

Since $\mu ( |\phi|>t)<t^{-\alpha}L(t)$, we see that
\[
\mu (\phi \not = \psi_n) \le C n^{-1-\alpha\epsilon}
\]
Thus
\begin{equation*}
  \mu (\max_{j\le n} \sum_{i=0}^j\phi \circ F^i \not = \max_{j\le n}
  \sum_{i=0}^j \psi_n \circ F^i )\le n\mu (\phi \not = \psi_n) \le C n^{-\alpha \epsilon}
\end{equation*}
Together with \eqref{means}, this implies that
\begin{equation}\label{error-maxima}
  |\max_{j\le n} \sum_{i=0}^j \bar\phi \circ F^i - \max_{j\le n}
  \sum_{i=0}^j \bar\psi_n \circ F^i | \dto 0.
\end{equation}

We will show that 
\begin{equation}\label{eq:max-estimate}
  n^{-(\frac{1}{\alpha} +\epsilon) }\max_{j\le n}\sum_{i=0}^{j}
  \bar{\psi}_n \circ F^j \dto 0 \text{ as $n\to\infty$}
\end{equation}
which together with Equation~\eqref{error-maxima} implies that
\[
  n^{-(\frac{1}{\alpha}+\epsilon )}\max_{j\le n} \sum_{i=0}^{j}
  \bar{\phi}\circ F^j \dto 0
\]
in distribution, as claimed.

For \eqref{eq:max-estimate} we use the result of Billingsley,
Proposition~\ref{Billingsley}, with $\nu=2$, $\gamma=1$,
$X_i = \bar{\psi}_n\circ F^i$ and
\[
  u_i=C n^{2/\alpha -1+\epsilon(2-\alpha)}\log n
\]

By stationarity,
\[
  \mu  [ (\sum_{j=a}^{a+m-1} \bar{\psi}_n \circ F^j)^2]
  = \mu [(\sum_{j=1}^{m} \bar{\psi}_n \circ F^j)^2]= \sum_{j=1}^m\mu [
  \bar{\psi}_n^2\circ F^j ] + 2\sum_{i=1}^m\sum_{j=i+1}^m\mu [\bar{\psi}_n
  \circ F^i \bar{\psi}_n \circ F^j]
\]

By Proposition~\ref{prop:karamata} (b), taking $k=2$ and
$u=n^{\frac{1}{\alpha}+\epsilon}$,
\begin{equation}\label{L2}
  \mu (\psi^2_n )\le C n^{\frac{2}{\alpha}+2\epsilon} n^{-1-\alpha\epsilon}
  \le C n^{2/\alpha-1+\epsilon(2-\alpha)} 
\end{equation}
and, since $\mu (\phi )$ is finite, we also have that
\begin{equation}\label{L2-normalized}
  \mu (\bar{\psi}_n^2 ) = \mu (\psi_n^2 ) - (\mu (\psi_n ))^2 \le C
  n^{\frac{2}{\alpha}-1+\epsilon(2-\alpha)}
\end{equation}
Therefore
\[
  \sum_{j=1}^m \mu [ (\bar{\psi}_n\circ F^j)^2] \le
  Cmn^{2/\alpha-1+\epsilon(2-\alpha)}
\]

Using that $F$ preserves $\mu$, the double sum above is
\[
  2\sum_{i=1}^m\sum_{j=i+1}^m\mu [\bar{\psi}_n \circ F^{j-i} \bar{\psi}_n]
\]
and can be approximated by the sum with $f_n$:
\[
  \sum_{i=1}^m\sum_{j=i+1}^m|\mu [\bar{\psi}_n \circ F^{j-i}
  \bar{\psi}_n]-\mu [f_n \circ F^{j-i} f_n]|
\]
\[
  \le \sum_{i=1}^m \sum_{i=1}^m
  (\|\bar{\psi}_n\|_{\infty}+\|f_n\|_{\infty}) \|\bar{\psi}_n-f_n\|_1 \le C
  m^2 \|\bar{\psi}_n\|_{\infty} \|\bar{\psi}_n-f_n\|_1 \le C n^{-1}
\]

As a consequence of the exponential decay of correlations for $B_1$ versus
$B_2$
\[
  |\mu [f_n \circ F^{j-i} f_n]|\le C \theta^{j-i} n^{2\kappa}
\]
there exists
\[
  C_{n}=|\frac{2\kappa+4}{\log \theta}| \log n 
\]
such that if $j-i>C_n$ then
\[
\mu [ f_n \circ F^{j-i} f_n]\le n^{-3}
\]
and so
\[
  \sum_{i=1}^m \sum_{j-i>C_n } |\mu [ f_n \circ F^{j-i} f_n]|= O(n^{-1})
\]

It remains to estimate
\[
  \sum_{i=1}^m\sum_{j=i+1}^{i+C_n} \mu [\bar{\psi}_n\circ F^j \bar{\psi}_n
  \circ F^i] =\sum_{i=1}^m\sum_{k=1}^{C_n} \mu [\bar{\psi}_{n} \circ F^{k}
  \bar{\psi}_n]
\]
By the H\"older inequality and stationarity, using~\eqref{L2-normalized}
\[
  |\mu [\bar{\psi}_n \circ F^{k} \cdot \bar{\psi}_n]| \le
  \mu ([\bar{\psi}_n^2 ) \le C n^{2/\alpha-1+\epsilon(2-\alpha)}
\]
so
\[
  |\sum_{i=1}^m\sum_{k=1}^{C_n } \mu [\bar{\psi}_n \circ F^{k} \cdot
  \bar{\psi}_n]| \le C\cdot C_n m n^{2/\alpha-1+\epsilon(2-\alpha)} \le C m
  n^{2/\alpha-2\epsilon-1-\epsilon \alpha} \log n
\]
Collecting all the estimates, we obtain that
\[
  \mu [(\sum_{j=a}^{a+m} \bar{\psi}_n \circ F^j)^2] \le C m n^{2/\alpha
    -1+\epsilon(2-\alpha)}\log n \le \sum_{j=a}^{a+m} u_j
\]
with
\[
  u_i=C n^{2/\alpha -1+\epsilon(2-\alpha)}\log n
\]

Applying Proposition~\ref{Billingsley} with $\nu=2$, $\gamma=1$ we obtain
 \[
   \mu [|\max_{j\le n}\sum_{i=0}^{j} \bar{\phi}_n \circ F^j (x)|^2]\le
   C(\log_2 4n)^2 (\log n) n^{2/\alpha+\epsilon(2-\alpha)}
\]
The claim \eqref{eq:max-estimate} follows now by Chebyshev's inequality,
since
\[
  \mu (|\max_{j\le n}\sum_{i=0}^{j} \bar{\psi}_n \circ F^j (x)|>\delta
  n^{\frac{1}{\alpha}+\epsilon} ) 
\]
\[
  \le \delta^{-2} n^{-\frac{2}{\alpha}-2\epsilon} \mu [|\max_{j\le
    n}\sum_{i=0}^{j} \bar{\psi}_n \circ F^j (x)|^2] =O(\delta^{-2} (\log
  n)^3 n^{-\alpha\epsilon})
\]
\end{proof}

\subsection{A result of Gou\"ezel}\label{gouezel_lifting}

We use the following result of Gou\"{e}zel~\cite[Theorem
4.6]{Gouezel_Doubling}:

\begin{prop}\label{gouezel-lifting}
Let $(T,X,\mu)$  be an ergodic probability preserving map, let $\alpha (n)$ and  and $B_n$ be two sequences of integers which are regularly varying with positive indices.  Let $A_n\in \R$ and let $Y \subset X$ be a subset with positive measure. We will denote by $\mu_Y (.):= \frac{\mu |_{Y}  }{\mu (Y)}$ the induced probability measure.
Let $R: Y \to \bN$ be the return time of $T$  to $Y$ and $F =T^{R} : Y \to Y$ be the induced map. Define $\bar{R}=\int_Y R d\mu= \frac{1}{\mu (Y)}$. Consider a measurable function $\phi: X\to \R$ and define $\Phi : Y \to \R$ by $\Phi (y)=\sum_{j=0}^{R(y)-1} \phi\circ T^j$.
Define $S_n (\Phi)=\sum_{j=0}^{n-1} \Phi\circ F^j$. Assume that 
\[
\frac{ S_n (\Phi)-A_n}{B_n}
\]
converges in distribution (with respect to $\mu_Y$) to a random variable $S$.

  Additionally assume that either:
  \begin{enumerate}
  \item [(a)] $\frac{\sum_{j=0}^n R\circ F^j- n\bar{R}}{\alpha(n)}$ tends
    in probability to zero and
    $\max_{0\le k\le \alpha (n)} \frac{|S_k (\Phi)|}{B_n}$ is tight
  \item []
  \item [or]
  \item []
  \item [(b)] $\frac{\sum_{j=0}^n R\circ F^j- n\bar{R}}{\alpha(n)}$ is tight and
    $\max_{0\le k\le \alpha (n)} \frac{|S_k (\Phi)|}{B_n}$ tends in probability
    to zero.
  \end{enumerate}

  Then 
  \[
    \big(\sum_{j=0}^{n-1} \phi\circ T^j -A_{\left \lfloor n\mu(Y)\right
      \rfloor}\big)/B_{\left \lfloor n\mu (Y)\right \rfloor }
  \]
  converges in distribution (with respect to $\mu$) to $S$.

\end{prop}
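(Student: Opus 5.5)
The plan is to decompose an arbitrary orbit segment of $T$ into complete excursions from $Y$, plus two incomplete pieces at the ends. Write $S^T_N(\phi)=\sum_{j=0}^{N-1}\phi\circ T^j$ and $R_k=\sum_{j=0}^{k-1}R\circ F^j$, so that $S^T_{R_k(y)}(\phi)(y)=S_k(\Phi)(y)$ for $y\in Y$. Fix $N$ and put $m=\lfloor N\mu(Y)\rfloor$, which is the expected number of returns in time $N$ since $\bar R=1/\mu(Y)$.

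For $x\in X$ let $e(x)\ge 0$ be the first entry time to $Y$ (defined $\mu$-a.e. by ergodicity), $y=T^{e(x)}x$, and $k_N(x)=\#\{k\ge 1: R_k(y)\le N-e(x)\}$. Then
\[
S^T_N(\phi)(x)=G_{\rm in}(x)+S_{k_N}(\Phi)(y)+G_{\rm out}(T^N x),
\]
where $G_{\rm in}(x)=\sum_{j<e(x)}\phi\circ T^j(x)$ is a fixed measurable function of $x$. The term $G_{\rm out}(z)$ is (up to sign conventions) the sum of $\phi$ along the backward orbit of $z$ to its last visit to $Y$, so it is a fixed measurable function of $z=T^Nx$. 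The key observation is that under $\mu$ the point $T^Nx$ is $\mu$-distributed, because $T$ preserves $\mu$. Hence the laws of $G_{\rm in}$ and $G_{\rm out}\circ T^N$ do not depend on $N$ and are tight, and since $B_m\to\infty$ both end terms vanish after division by $B_m$.

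It remains to compare $S_{k_N}(\Phi)(y)$ with $S_m(\Phi)(y)$. First I would pass between $\mu$ and $\mu_Y$. The map $x\mapsto y$ pushes $\mu$ forward to a measure absolutely continuous with respect to $\mu_Y$. By Eagleson's theorem (strong distributional convergence for ergodic $T$, or Zweimüller's version for induced maps), convergence in distribution of $(S_m(\Phi)-A_m)/B_m$ under $\mu_Y$ persists under any absolutely continuous probability measure. This transfer needs $(S_m(\Phi)\circ F-S_m(\Phi))/B_m\to 0$ in probability, which holds because this difference equals $(\Phi\circ F^m-\Phi)/B_m$. The shift by $e(x)$ is harmless for the same reason: it changes $N$ by a tight amount, which alters $R$-deviations and $m$ by $O(1)$.

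On $Y$, $|k_N-m|$ is controlled by $R_m-m\bar R$ through $R_{k_N}\le N<R_{k_N+1}$, and the hypotheses on $(R_n-n\bar R)/\alpha(n)$ pass to $k_N$ by inverting this relation. In case (b), given $\eta>0$ choose $P$ such that $|k_N-m|\le P\alpha(m)$ off a set of measure $\eta$. Then $|S_{k_N}(\Phi)-S_m(\Phi)|\le\max_{|k|\le P\alpha(m)}|S_{m+k}(\Phi)-S_m(\Phi)|$. By $F$-invariance of $\mu_Y$, applied after shifting the starting index to $m-P\alpha(m)$, this maximum has the law of at most twice $\max_{k\le 2P\alpha(m)}|S_k(\Phi)|$. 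After dividing by $B_m$, this goes to $0$ in probability: regular variation of $\alpha$ lets us cover $2P\alpha(m)$ by $\alpha(m')$ with $m'\asymp m$, and $B_{m'}/B_m$ stays bounded.

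In case (a), $|k_N-m|=o(\alpha(m))$ in probability. Fix $\delta>0$ and choose $m'$ with $\alpha(m')\approx\delta\alpha(m)$, so that $m'\approx\delta^{1/a}m$ with $a$ the index of $\alpha$. Then $B_{m'}/B_m\approx\delta^{b/a}$, where $b$ is the index of $B$. The tightness of $\max_{k\le\alpha(m')}|S_k(\Phi)|/B_{m'}$ therefore makes $\max_{k\le\delta\alpha(m)}|S_k(\Phi)|/B_m$ small in probability as $\delta\to0$. Combining this with the same stationarity argument shows that $(S_{k_N}(\Phi)-S_m(\Phi))/B_m\to0$ in probability. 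Slutsky's theorem then gives $(S^T_N(\phi)-A_m)/B_m\dto S$ under $\mu$.

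The main obstacle I expect is the bookkeeping of the two boundary excursions together with the change of measure. The argument relies on expressing $G_{\rm out}$ as a function of $T^Nx$ alone, which uses invertibility or a backward last-visit description. If that description is unavailable, I would instead dominate $G_{\rm out}$ by the single-excursion maximum $\max_{j<R}|S^T_j(\phi)|$ evaluated at $F^{k_N}y$, and control it through the same stationarity-of-windows argument.
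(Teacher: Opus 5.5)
\paragraph{Overall.} The paper does not prove this proposition; it quotes it from \cite[Theorem 4.6]{Gouezel_Doubling}. Your route is the standard argument for lifting results of this kind:
\begin{itemize}
\item Eagleson's theorem, to pass between $\mu_Y$ and probability measures absolutely continuous with respect to it;
\item renewal inversion of $R_{k_N}\le N<R_{k_N+1}$;
\item $F$-stationarity of windows together with regular variation of $\alpha(n)$ and $B_n$, to control $S_{k_N}(\Phi)-S_m(\Phi)$ in cases (a) and (b).
\end{itemize}
These steps are sound. They already give a complete proof when $T$ is invertible, which covers the billiard application.

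\paragraph{The gap: the final boundary term for non-invertible $T$.} The proposition is stated for an arbitrary ergodic $T$. The paper also applies it to the non-invertible LSV map in the proof of Theorem~\ref{cusp_intermittent}. In that setting you cannot write $G_{\rm out}$ as a function of $T^Nx$ via the backward orbit.

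Your fallback does not close this gap. Bounding $\max_{j<R}|S^T_j(\phi)|\circ F^{k_N}$ by its maximum over a window of $P\alpha(m)$ indices requires control of $\max_{k\le C\alpha(m)}\max_{j<R}|S^T_j(\phi)|\circ F^k$, and the hypotheses do not give this, for two reasons.
\begin{itemize}
\item The hypotheses only involve complete excursions. In case (a) they do not even make $\max_{k\le\alpha(m)}|\Phi\circ F^k|$ negligible against $B_m$; they only make it tight.
\item Intra-excursion maxima are not controlled at all. For $\phi=h-h\circ T$ with $h\equiv 0$ on $Y$ you get $\Phi\equiv0$, while $\max_{j<R}|S^T_j\phi|=\max_{j<R}|h\circ T^j|$ can be arbitrarily heavy-tailed.
\end{itemize}

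\paragraph{The fix.} No invertibility is needed. Let $\tau\le N$ be the last visit time of $x$ to $Y$. For $A\subset Y$ and $i\le N$,
\[
\mu\{T^{\tau}x\in A,\ N-\tau=i\}=\mu\bigl(T^{-(N-i)}(A\cap\{R>i\})\bigr)=\mu(A\cap\{R>i\}).
\]
So the straddling excursion, described by its starting point and its age, has a law that does not depend on $N$, up to the vanishing mass $\sum_{i>N}\mu(Y\cap\{R>i\})$. Since $G_{\rm out}=\sum_{j<N-\tau}\phi\circ T^j(T^\tau x)$ is a fixed function of this pair, it is tight, and it vanishes after division by $B_m$.

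Two other options also work:
\begin{itemize}
\item Overshoot to the next entrance, using
\[
S^T_N(\phi)(x)=S^T_{N+e(T^Nx)}(\phi)(x)-G_{\rm in}(T^Nx),
\]
where $G_{\rm in}\circ T^N$ has the law of $G_{\rm in}$.
\item Pass to the natural extension.
\end{itemize}

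With any of these replacements your proof goes through as written.
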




\bibliographystyle{alpha} 

\bibliography{references}

\end{document}